\theoremstyle{plain}
\newtheorem{Thm}{Theorem}[section]
\newtheorem{Thm*}{Theorem}[section]
\newtheorem{Thm'}[Thm]{"Theorem"}
\newtheorem{Cor}[Thm]{Corollary}
\newtheorem{Prop}[Thm]{Proposition}
\newtheorem{Lem}[Thm]{Lemma}
\newtheorem{Cl}[Thm]{Claim}
\theoremstyle{definition}
\newtheorem{Def}[Thm]{Definition}
\newtheorem{Emp}[Thm]{}
\numberwithin{equation}{section}
\newcommand{\nc}{\newcommand}
\nc{\lm}{\lambda}
\newcommand{\pp}{\boxtimes}
\newcommand{\ov}{\overline}
\newcommand{\B}[1]{\mathbb#1}
\newcommand{\cal}[1]{\mathcal{#1}}
\newcommand{\C}[1]{\cal#1}
\newcommand{\isom}{\overset {\thicksim}{\to}}
\newcommand{\lra}{\longrightarrow}
\newcommand{\lla}{\longleftarrow}
\newcommand{\hra}{\hookrightarrow}
\newcommand{\wt}{\widetilde}
\newcommand{\Gm}{\Gamma}
\newcommand{\lan}{\langle}
\newcommand{\ran}{\rangle}
\newcommand{\Dt}{\Delta}
\newcommand{\La}{\Lambda}
\newcommand{\al}{\alpha}
\newcommand{\form}[1]{(\ref{Eq:#1})}
\newcommand{\rs}[1]{Section \ref{S:#1}}
\newcommand{\rl}[1]{Lemma \ref{L:#1}}
\newcommand{\rcl}[1]{Claim \ref{C:#1}}
\newcommand{\rp}[1]{Proposition \ref{P:#1}}
\newcommand{\re}[1]{\ref{E:#1}}
\newcommand{\rco}[1]{Corollary \ref{C:#1}}
\newcommand{\rt}[1] {Theorem \ref{T:#1}}
\newcommand{\rd}[1]{Definition \ref{D:#1}}
\newcommand{\sm}{\smallsetminus}
\newcommand{\pr}{\operatorname{pr}}
\newcommand{\Spec}{\operatorname{Spec}}
\newcommand{\Aut}{\operatorname{Aut}}
\newcommand{\Gr}{\operatorname{Gr}}
\newcommand{\Fix}{\operatorname{Fix}}
\newcommand{\Tr}{\operatorname{Tr}}
\newcommand{\red}{\operatorname{red}}
\newcommand{\Id}{\operatorname{Id}}
\newcommand{\Hom}{\operatorname{Hom}}
\newcommand{\pt}{\operatorname{pt}}
\newcommand{\ql}{\B{Q}_{\ell}}
\newcommand{\qlbar}{\overline{\ql}}
\newcommand{\Sp}{\operatorname{Sp}}
\begin{document}

\title[Local terms for transversal intersections]
{Local terms for transversal intersections}

\date{\today}

\author{Yakov Varshavsky}
\address{Institute of Mathematics\\
The Hebrew University of Jerusalem\\
Givat-Ram, Jerusalem,  91904\\
Israel} \email{vyakov@math.huji.ac.il}

\thanks{This research was partially supported by the ISF grant 822/17}
\begin{abstract}
The goal of this note is to show that in the case of ``transversal intersections'' the ``true local terms" appearing in the Lefschetz trace formula are equal to the ``naive local terms". To prove the result, we extend the method of \cite{Va}, where the case of contracting correspondences is treated. Our new ingredients are the observation of Verdier \cite{Ve} that specialization of an \'etale sheaf to the normal cone is monodromic and the assertion that local terms are ``constant in families". As an application, we get a generalization of the  Deligne--Lusztig trace formula \cite{DL}.
\end{abstract}
\maketitle

\centerline{\em Dedicated to Luc Illusie on the occasion of his 80th birthday}
\section*{Introduction}

Let $f:X\to X$ be a morphism of schemes of finite type over an algebraically closed field $k$, let $\ell$ be a prime number different from the characteristic of
$k$, and let $\C{F}\in D_c^b(X,\qlbar)$ be equipped with a morphism $u:f^*\C{F}\to\C{F}$. Then for every fixed point $x\in \Fix(f)\subseteq X$, one can consider the
restriction $u_x:\C{F}_x\to\C{F}_x$, hence we can consider its trace $\Tr(u_x)\in\qlbar$, called the ``naive local term" of $u$ at $x$.

On the other hand, if $x\in\Fix(f)\subseteq X$ is an isolated fixed point, one can also consider the ``true local term" $LT_x(u)\in\qlbar$, appearing in the Lefschetz--Verdier trace formula,
so the natural question is when these two locals terms are equal.

Motivated by work of many people, including Illusie \cite{Il}, Pink \cite{Pi} and Fujiwara \cite{Fu}, it was shown in \cite{Va} that this is the case when $f$ is ``contracting near $x$",
by which we mean that the induced map of normal cones $N_x(f):N_x(X)\to N_x(X)$ maps $N_x(X)$ to the zero section. In particular, this happens when the induced map of Zariski tangent spaces $d_x(f):T_x(X)\to T_x(X)$ is zero.

A natural question is whether the equality  $LT_x(u)=\Tr(u_x)$ holds for a more general class of morphisms. For example, Deligne asked whether the equality holds when $x$ is the only fixed point of $d_x(f):T_x(X)\to T_x(X)$, or equivalently, when the linear map $d_x(f)-\Id:T_x(X)\to T_x(X)$ is invertible. Note that when $X$ is smooth at $x$, this condition is equivalent to the fact that  the graph of $f$ intersects transversally with the diagonal at $x$.

The main result of this note gives an affirmative answer to Deligne's question. Moreover, in order to get an equality $LT_x(u)=\Tr(u_x)$ it suffices to assume a weaker condition that $x$ is the only fixed point of $N_x(f):N_x(X)\to N_x(X)$ (see \rco{main}). In particular, we show this  in the case when $f$ is an automorphism of $X$ of finite order, prime to the characteristic of $k$, or, more generally, a ``semisimple" automorphism (see \rco{eqlocterms}).

Actually, as in \cite{Va}, we show a more general result (see \rt{main}) in which a morphism $f$ is replaced by a correspondence, and a fixed point $x$ is replaced by a $c$-invariant closed subscheme $Z\subseteq X$. Moreover, instead of showing the equality of local terms we show a more general ``local" assertion that in some cases the so-called ``trace maps" commute with restrictions. Namely, we show it in the case when $c$ has ``no almost fixed points in the punctured tubular neighborhood of $Z$" (see \rd{good}).

As an easy application, we prove a generalization of the Deligne--Lusztig trace formula (see \rt{dl}).

To prove our result, we follow the strategy of \cite{Va}. First, using additivity of traces, we reduce to the case when $\C{F}_x\simeq 0$. In this case, $\Tr(u_x)=0$, thus we have to show that
$LT_x(u)=0$. Next, using specialization to the normal cone, we reduce to the case when $f:X\to X$ is replaced by $N_x(f):N_x(X)\to N_x(X)$ and $\C{F}$ by its specialization $sp_x(\C{F})$.
In other words, we can assume that $X$ is a cone with vertex $x$, and $f$ is $\B{G}_m$-equivariant.

In the contracting case, treated in \cite{Va}, the argument stops there. Indeed, after passing to normal cones we can assume that $f$ is the constant map with image $x$. In this case, our assumption $\C{F}_x\simeq 0$ implies that  $f^*\C{F}\simeq 0$, thus $u=0$, hence $LT_x(u)=0$.

In general, by a theorem of Verdier \cite{Ve}, we can assume that $\C{F}$ is monodromic. Since it is enough to show an analogous  assertion for sheaves with finite coefficients, we can thus assume that $\C{F}$ is $\B{G}_m$-equivariant
with respect to the action $(t,y)\mapsto t^n(y)$ for some $n$.

Since $f$ is homotopic to the constant map with image $x$ (via the homotopy $f_t(y):=t^n f(y)$) it suffices to show that local terms are ``constant in families". We deduce the latter assertion from the fact that local terms commute with nearby cycles.

The paper is organized as follows. In Section 1 we introduce correspondences, trace maps and local terms.  In Section 2 we define relative correspondences and formulate Proposition 2.5 asserting that in some cases trace maps are ``constant in families". In Section 3 we study a particular case of relative correspondences, obtained from schemes with an action of an algebraic monoid $(\B{A}^1,\cdot)$.

In Section 4 we formulate our main result (\rt{main}) asserting that in some cases trace maps commute with restrictions to closed subschemes. We also deduce an affirmative answer to Deligne's question, discussed earlier. In Section 5 we apply the results of Section 4 to the case of an automorphism and deduce a generalization of the Deligne--Lusztig trace formula. 

Finally, we prove \rt{main} in Section 6 and prove Proposition 2.5 in Section 7. 

I thank Luc Illusie, who explained to me a question of Deligne several years ago and expressed his interest on many occasions. I also thank
David Hansen and Jared Weinstein for their comments and stimulating questions (see \re{KW}) and thank Helene Esnault and Nick Rozenblyum for their interest.

\section*{Notation}

For a scheme $X$, we denote by $X_{\red}$ the corresponding reduced scheme. For a morphism of schemes $f:Y\to X$ and a closed subscheme $Z\subseteq X$, we denote by $f^{-1}(Z)\subseteq Y$ the schematic inverse image of $Z$.

Throughout most of the paper all schemes will be of finite type over a fixed algebraically closed field $k$. The only exception is  \rs{rpconst}, where all schemes will be of finite type over a spectrum of a discrete valuation ring over $k$ with residue field $k$.

We fix a prime $\ell$,  invertible in $k$, and a commutative ring with identity $\La$, which is either finite and is annihilated by some
power of $\ell$, or a  finite extension of $\B{Z}_{\ell}$ or $\B{Q}_{\ell}$.

To each scheme $X$ as above, we associate a category $D_{ctf}^b(X,\La)$ of
``complexes of finite tor-dimension
with constructible cohomology'' (see \cite[Rapport 4.6]{SGA4.5} when $\La$ is finite and
\cite[1.1.2-3]{De} in other cases). This category is known to be stable under the six
operations  $f^*, f^!, f_*, f_!, \otimes$ and $\C{RHom}$ (see \cite[Th. finitude, 1.7]{SGA4.5}). 

For each $X$ as above, we denote by $\pi_X:X\to\pt:=\Spec k$ the structure morphism, by
$\La_X\in D_{ctf}^b(X,\La)$ the constant sheaf with fiber $\La$, and
by $K_X=\pi_X^!(\La_{\pt})$ the dualizing complex of $X$.
We will also write $R\Gm(X,\cdot)$ (resp. $R\Gm_c(X,\cdot)$) instead of $\pi_{X*}$ (resp $\pi_{X!}$).

For an embedding $i:Y\hra X$ and $\C{F}\in D_{ctf}^b(X,\La)$, we will often write $\C{F}|_Y$ instead of
$i^*\C{F}$.

We will freely use various base change morphisms
(see, for example, \cite[XVII, 2.1.3 and XVIII, 3.1.12.3, 3.1.13.2, 3.1.14.2]{SGA4}), which we will denote by $BC$.

\section{Correspondences and trace maps}

\begin{Emp} \label{E:correspond}
{\bf Correspondences.}
(a)  By a {\em correspondence}, we mean a morphism of schemes of the form
$c=(c_l,c_r):C\to X\times X$, which can be also viewed as a diagram
$X\overset{c_l}{\lla}C  \overset{c_r}{\lra}X$.

(b) Let $c:C\to X\times X$ and $b:B\to Y\times Y$ be correspondences.
By a {\em morphism} from $c$ to $b$, we mean a pair of morphisms $[f]=(f,g)$, making the following
diagram commutative
\begin{equation} \label{Eq:funct}
\CD
        X   @<{c_l}<<                    C        @>{c_r}>>         X\\
        @V{f}VV                        @V{g}VV                       @VV{f}V\\
        Y @<{b_l}<<                    B    @>{b_r}>>            Y.
\endCD
\end{equation}

(c) A correspondence $c:C\to X\times X$ gives rise to a Cartesian diagram
\[
\CD
\Fix(c) @>>> C\\
@VVV @V{c}VV\\
X@>\Dt>> X\times X,
\endCD
\]
where $\Dt:X\to X\times X$ is the diagonal map. We call $\Fix(c)$ {\em the scheme of fixed points of $c$}.

(d) We call a morphism $[f]$ from (b) {\em Cartesian}, if the right inner square of \form{funct} is Cartesian.
\end{Emp}

\begin{Emp} \label{E:restr}
{\bf Restriction of correspondences.}
Let $c:C\to X\times X$ be a correspondence,  $W\subseteq C$ an open subscheme, and $Z\subseteq X$ a locally closed subscheme.

(a) We denote by $c|_W:W\to X\times X$ the restriction of $c$.

(b) Let $c|_Z:c^{-1}(Z\times Z)\to Z\times Z$ be the restriction of $c$. By definition, the inclusion maps
$Z\hra X$ and $c^{-1}(Z\times Z)\hra C$ define a morphism $c|_Z\to c$ of correspondences.

%
%

(c) We say that $Z$ is {\em (schematically) $c$-invariant}, if $c_r^{-1}(Z)\subseteq c_l^{-1}(Z)$. This happens if and only if we have
$c^{-1}(Z\times Z)=c_r^{-1}(Z)$ or, equivalently, 
the natural morphism of correspondences $c|_Z\to c$ from (b) is Cartesian.

\end{Emp}

\begin{Emp} \label{E:rem}
{\bf Remark.} Our conventions slightly differ from those of \cite[1.5.6]{Va}. For example, we do not assume that $Z$ is closed, 
our notion of $c$-invariance is stronger than the one of \cite[1.5.1]{Va}, and when $Z$ is $c$-invariant, then
$c|_Z$ in the sense of \cite{Va} is the correspondence $c^{-1}(Z\times Z)_{\red}\to Z\times Z$.
\end{Emp}

\begin{Emp} \label{E:cohcorr}
{\bf Cohomological correspondences.}
Let $c:C\to X\times X$ be a correspondence, and let $\C{F}\in D^b_{ctf}(X,\La)$.

(a) By {\em $c$-morphism} or a {\em cohomological correspondence lifting $c$}, we mean an element of
\[
\Hom_c(\C{F},\C{F}):=\Hom(c_l^*\C{F},c_r^!\C{F})\simeq\Hom(c_{r!}c_l^*\C{F},\C{F}).
\]

(b) Let $[f]:c\to b$ be a Cartesian morphism of correspondences (see \re{correspond}(d)).
Then every $b$-morphism $u:b_l^*\C{F}\to b_r^!\C{F}$ gives rise to a $c$-morphism $[f]^*(u):c_l^*(f^*\C{F})\to c_r^!(f^*\C{F})$ 
defined as a composition
\[
c_l^*(f^*\C{F})\simeq g^*(b_l^*\C{F})\overset{u}{\lra} g^*(b_r^!\C{F})\overset{BC}{\lra}c_r^!(f^*\C{F}),
\]
where base change morphism $BC$ exists, because $[f]$ is Cartesian.

(c) As in \cite[1.1.9]{Va}, for an open subset $W\subseteq C$, every $c$-morphism $u$ gives rise to a $c|_W$-morphism $u|_W:(c_l^*\C{F})|_W\to (c_r^!\C{F})|_W$.

(d) It follows from (b) and \re{restr}(c) that for a $c$-invariant subscheme $Z\subseteq X$, every $c$-morphism $u$ gives rise to a $c|_Z$-morphism
$u|_Z$ (compare \cite[1.5.6(a)]{Va}).
\end{Emp}


\begin{Emp} \label{E:locterms}
{\bf Trace maps and local terms.} Fix  a correspondence $c:C\to X\times X$.

(a) As in \cite[1.2.2]{Va}, to every $\C{F}\in D^b_{ctf}(X,\La)$ we associate the trace map

\[\C{Tr}_c:\Hom_c(\C{F},\C{F})\to H^0(\Fix(c),K_{\Fix(c)}).\]

(b)  For an {\em open subset} $\beta$ of $\Fix(c)$,\footnote{by which we mean that $\beta\subseteq\Fix(c)$ is a locally closed subscheme such that
$\beta_{\red}\subseteq\Fix(c)_{\red}$ is open} we denote by
\begin{equation*} \label{Eq:lterm2}
\C{Tr}_{\beta}: \Hom_c(\C{F},\C{F})\to H^0(\beta, K_{\beta})
\end{equation*}
the composition of $\C{Tr}_c$ and the restriction map
$H^0(\Fix(c),K_{\Fix(c)})\to  H^0(\beta, K_{\beta})$.

(c) If in addition $\beta$ is proper over $k$, we denote by
\begin{equation*} \label{Eq:lterm3}
LT_{\beta}:\Hom_c(\C{F},\C{F})\to\La
\end{equation*}
the composition of $\C{Tr}_{\beta}$ and the integration map
$\pi_{\beta!}:H^0(\beta, K_{\beta})\to \La$.

(d) In the case when $\beta$ is a {\em connected component} of $\Fix(c)$,\footnote{that is, $\beta$ is a closed connected subscheme of $\Fix(c)$ such that $\beta_{\red}\subseteq \Fix(c)_{\red}$ is open} which is proper
over $k$, $LT_{\beta}(u)$ is usually
called the {\em (true) local term} of $u$ at $\beta$.
\end{Emp}

\section{Relative correspondences}

\begin{Emp} \label{E:relcor}
{\bf Relative correspondences.} Let $S$ be a scheme over $k$. By a {\em relative correspondences} over $S$, we mean a morphism ${c}=({c}_l,{c}_r):{C}\to {X}\times_S{X}$ of schemes over $S$, or equivalently, a correspondence ${c}=({c}_l,{c}_r):{C}\to{X}\times{X}$ such that ${c}_l$ and ${c}_r$ are morphisms over $S$.

(a) For a correspondence $c$ as above and a morphism $g:S'\to S$ of schemes over $k$ we can form a relative correspondence $g^*({c}):={c}\times_S S'$ over $S'$. Moreover, it follows from \re{cohcorr}(b) that every $c$-morphism ${u}\in\Hom_c(\C{F},\C{F})$ gives rise to the
$g^*({c})$-morphism $g^*({u})\in \Hom_{g^*(c)}(g^*\C{F},g^*\C{F})$, where $g^*\C{F}\in D^b_{ctf}(X\times_S S',\La)$ denotes the $*$-pullback of $\C{F}$.   

(b) For a geometric point $s$ of $S$, let $i_s:\{s\}\to S$ be the canonical map, and we set ${c}_s:=i_s^*({c})$.
Then, by (a), every ${c}$-morphism ${u}\in \Hom_{{c}}(\C{F},\C{F})$ gives rise to a ${c}_s$-morphism  ${u}_s:=i_s^*({u})\in\Hom_{{c}_s}(\C{F}_s,\C{F}_s)$.
Thus we can form the trace map $\C{Tr}_{{c}_s}({u}_{s})\in H^0(\Fix({c}_s), K_{\Fix({c}_s)})$.
\end{Emp}

\begin{Emp}
{\bf Remark.}
In other words, a relative correspondence  ${c}$ over $S$ gives rise a family of correspondences ${c}_s:{C}_s\to {X}_s\times {X}_s$, parameterized by a collection of geometric points
$s$ of $S$. Moreover, every ${c}$-morphism
${u}$ gives rise to a family of ${c}_s$-morphisms ${u}_s\in\Hom_{{c}_s}(\C{F}_s,\C{F}_s)$, thus a family of trace maps
$\C{Tr}_{{c}_s}({u}_s)\in H^0(\Fix({c}_s), K_{\Fix({c}_s}))$.
\end{Emp}

\rp{const} below, whose proof will be given in \rs{rpconst},
 asserts that in some cases the assignment $s\mapsto \C{Tr}_{{c}_s}({u}_s)$ is ``constant".

\begin{Emp} \label{E:family}
{\bf Notation.} We say that a morphism  $f:X\to S$ is a {\em topologically constant family}, if  the reduced scheme ${X}_{\red}$ is isomorphic to a product
$Y\times S_{\red}$ over $S$.
\end{Emp}

\begin{Cl} \label{C:family}
Assume that  $f:X\to S$ is a topologically constant family, and that $S$ is connected. Then for every two geometric points $s,t$ of $S$, we have a canonical identification
$R\Gm({X}_{s},K_{{X}_{s}})\simeq R\Gm({X}_{t},K_{X_t})$, hence $H^0({X}_{s},K_{{X}_{s}})\simeq H^0({X}_{t},K_{X_t})$.
\end{Cl}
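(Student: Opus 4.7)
The plan is to reduce both $R\Gm(X_s, K_{X_s})$ and $R\Gm(X_t, K_{X_t})$ to a common object $R\Gm(Y, K_Y)$ by invoking topological invariance of the \'etale site.

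First, I would use the following standard fact: for any $k$-scheme $Z$ of finite type, the closed immersion $Z_{\red}\hookrightarrow Z$ is a universal homeomorphism, hence induces an equivalence of \'etale sites, which produces a canonical identification $R\Gm(Z, K_Z)\simeq R\Gm(Z_{\red}, K_{Z_{\red}})$. Applying this to $Z = X_s$ and $Z = X_t$ reduces the problem to identifying $R\Gm((X_s)_{\red}, K_{(X_s)_{\red}})$ canonically with a common object, independent of $s$.

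Next, I would identify $(X_s)_{\red}$ with $Y$. Fix the isomorphism $\varphi: X_{\red}\isom Y\times_k S_{\red}$ over $S$ provided by the hypothesis; note that $Y$ is reduced, since $k$ is perfect and $X_{\red}$ is reduced. Any geometric point $s: \Spec k\to S$ factors uniquely through $S_{\red}$, and so $\varphi$ induces an isomorphism
\[
(X_{\red})_s \;\simeq\; (Y\times_k S_{\red})\times_{S_{\red}}\Spec k \;\simeq\; Y.
\]
Since $X_{\red}\hookrightarrow X$ is a universal homeomorphism, so is the base change $(X_{\red})_s\hookrightarrow X_s$; as $(X_{\red})_s\simeq Y$ is already reduced, this forces $(X_s)_{\red}=(X_{\red})_s\simeq Y$ canonically. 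Combining with the first step gives $R\Gm(X_s, K_{X_s})\simeq R\Gm(Y, K_Y)$, and the same reasoning applied to $t$ produces the desired canonical identification; passing to $H^0$ yields the second assertion.

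The entire argument is essentially bookkeeping around topological invariance of \'etale cohomology, and I do not foresee a genuine obstacle. The hypothesis that $S$ is connected is not actually used in the cohomological identifications themselves; it presumably appears so that the $k$-scheme $Y$ is unambiguously determined by the topologically constant family, which is the form in which the claim will be applied later to show that trace values do not depend on the geometric point of $S$.
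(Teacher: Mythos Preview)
Your argument does produce an identification, but by a different route than the paper. The paper sets $\C{F}:=f_*(f^!\La_S)$ on $S$, observes that the hypothesis makes $\C{F}$ constant and the base-change maps $\C{F}_s\to R\Gm(X_s,K_{X_s})$ isomorphisms, and then \emph{defines} the identification via the specialization maps $\C{F}_s\to\C{F}_t$, which for a constant sheaf on a connected base are isomorphisms independent of the specialization arrow. You instead fix a trivialization $\varphi$ and route both sides through $R\Gm(Y,K_Y)$ using topological invariance of the \'etale site.

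The distinction matters downstream: in \rl{toptriv} one must check that the identification of \rcl{family} agrees with the map $\ov{\Sp}_X$ built from nearby cycles, and the paper's specialization-map description is exactly what makes that comparison tractable. Your identification, by contrast, depends a priori on the choice of $\varphi$ (two trivializations of $X_{\red}$ differ by an $S_{\red}$-family of automorphisms of $Y$, which need not be constant even for connected $S$), so calling it ``canonical'' is optimistic, and matching it with $\ov{\Sp}_X$ would require an additional argument. This is also where connectedness of $S$ is genuinely used in the paper's proof---to link any two geometric points by a chain of specializations---rather than merely to pin down $Y$ as you suggest.
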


\begin{proof}
Set $K_{X/S}:=f^!(\La_S)\in D_{ctf}^b(X,\La)$ and  $\C{F}:=f_*(K_{X/S})\in D_{ctf}^b(S,\La)$. Our assumption on $f$ implies that for every geometric point $s$ of $S$, the base change morphisms
\[
\C{F}_s=R\Gm(s,\C{F}_s)\to R\Gm({X}_{s},i_s^*(K_{X/S}))\to R\Gm({X}_{s},K_{{X}_{s}})
\]
are isomorphisms. Furthermore, the assumption also implies that $\C{F}$ is constant, that is, isomorphic to a pullback of an object in $D_{ctf}^b(\pt,\La)$. Then for every specialization arrow $\al:t\to s$, the specialization map $\al^*:\C{F}_s\to\C{F}_t$ (see \cite[VIII, 7]{SGA4}) is an isomorphism (because $\C{F}$ is locally constant), and does not depend on the specialization arrow $\al$ (only on $s$ and $t$). Thus the assertion follows from the assumption that $S$ is connected.
\end{proof}


\begin{Prop} \label{P:const}
Let ${c}:{C}\to {X}\times {X}$ be a relative correspondence over $S$ such that  $S$ is connected, and that $\Fix({c})\to S$ is a topologically constant family.

Then for every  ${c}$-morphism ${u}\in \Hom_{{c}}(\C{F},\C{F})$ such that  $\C{F}$ is ULA over $S$, the assignment  $s\mapsto \C{Tr}_{{c}_s}({u}_s)$ is ``constant", that is, for every two geometric points $s,t$ of $S$, the identification $H^0({X}_{s},K_{{X}_{s}})\simeq H^0({X}_{t},K_{{X}_{t}})$
from \rcl{family} identifies
$\C{Tr}_{{c}_{s}}({u}_{s})$ with  $\C{Tr}_{{c}_{t}}({u}_{t})$.

In particular, we have  $\C{Tr}_{{c}_{s}}({u}_{s})=0$ if and only if $\C{Tr}_{{c}_{t}}({u}_{t})=0$.
\end{Prop}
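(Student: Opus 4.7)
My strategy is to upgrade the trace construction of \cite[1.2.2]{Va} to a relative trace map $\C{Tr}_{c/S}(u)\in H^0(\Fix(c),K_{\Fix(c)/S})$ (where $K_{\Fix(c)/S}:=p^!\La_S$ for $p\colon\Fix(c)\to S$) and then exploit the topological constancy of $p$ together with the connectedness of $S$ to show that this relative trace is ``constant'' in a suitable sense. Since $S$ is connected, any two geometric points are joined by a chain of specialization arrows, so it suffices to treat a single arrow $\alpha\colon t\to s$. Pulling back along a map from a strictly henselian trait $T$ into $S$ (carrying the closed point to $s$ and the geometric generic point to $t$) reduces me to the setting of \rs{rpconst}, where the base is of finite type over a DVR.

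The relative trace $\C{Tr}_{c/S}$ is constructed by mimicking \cite[1.2.2]{Va} with $\La_{\pt}$ replaced by $\La_S$ throughout. The crucial claim is fiberwise compatibility: for each geometric point $s$, the canonical identification $i_s^*K_{\Fix(c)/S}\simeq K_{\Fix(c_s)}$ carries $i_s^*(\C{Tr}_{c/S}(u))$ to $\C{Tr}_{c_s}(u_s)$. This is precisely where the ULA hypothesis on $\C{F}$ is essential: ULA guarantees that the $!$-pullbacks and pairings entering the trace construction commute with arbitrary base change $S'\to S$, hence with restriction to fibers.

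To conclude, I would view $\C{Tr}_{c/S}(u)$ as a global section of $\C{E}:=p_*K_{\Fix(c)/S}$ on $S$. Applying the argument of \rcl{family} to $p\colon\Fix(c)\to S$ in place of $f\colon X\to S$ shows that $\C{E}$ is locally constant on $S$ and that its stalkwise specialization maps coincide with the identifications from \rcl{family}. A global section of a locally constant sheaf on a connected base is preserved by stalk specialization, so fiberwise compatibility yields $\C{Tr}_{c_s}(u_s)=\C{Tr}_{c_t}(u_t)$ under the identification of \rcl{family}, as claimed. In particular, vanishing at one fiber propagates to all fibers.

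The main obstacle is the fiberwise compatibility claim of the second step. The definition of the trace involves several $!$-pullbacks, Künneth isomorphisms, and pairings with the fundamental class of the diagonal, each of which must be tracked through the base change $i_s\colon\{s\}\hookrightarrow S$. The ULA assumption supplies the essential base change isomorphisms $i_s^*c_r^!\C{F}\simeq c_{r,s}^!\C{F}_s$ that are not available for general constructible $\C{F}$, but verifying that the entire trace construction commutes with $i_s^*$ is a substantial diagram chase in the six-functor formalism, and I expect this verification to occupy most of \rs{rpconst}.
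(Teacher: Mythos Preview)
Your outline is reasonable and would likely succeed, but it is not the route the paper takes. You propose building a \emph{relative} trace $\C{Tr}_{c/S}(u)\in H^0(\Fix(c),K_{\Fix(c)/S})$ and then verifying directly, by a six-functor diagram chase, that it restricts fiberwise to $\C{Tr}_{c_s}(u_s)$; constancy then follows because a global section of a locally constant sheaf on a connected base is constant. The paper instead reduces (as you do) to $S=\C{D}$ a DVR with $t=\ov{\eta}$ and $s$ the special point, but then uses the \emph{nearby cycle functor} $\Psi$ rather than a relative trace: the ULA hypothesis gives an isomorphism $\C{F}_s\simeq\Psi_X(\C{F}_{\eta})$ identifying $u_s$ with $\Psi_c(u_{\eta})$ (\rl{nearby}), and the compatibility of trace maps with nearby cycles (\rp{nearby}, a mild variant of \cite[Prop.~1.3.5]{Va}) yields $\C{Tr}_{c_s}(u_s)=\Sp_{\Fix(c)}(\C{Tr}_{c_{\eta}}(u_{\eta}))$. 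A separate computation (\rl{toptriv}) identifies the specialization map $\ov{\Sp}_{\Fix(c)}$ with the canonical isomorphism of \rcl{family} under the topological-constancy hypothesis.

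The payoff of the paper's route is that the substantial diagram chase you anticipate---compatibility of the trace construction with a cohomological morphism---was already carried out in \cite[\S4]{Va} in the abstract framework of ``compactifiable cohomological morphisms,'' and nearby cycles are checked to fit that framework. So \rs{rpconst} mostly \emph{cites} the chase rather than performing it. Your approach is conceptually cleaner and avoids nearby cycles altogether, but the fiberwise-compatibility step is essentially the same chase recast for the base-change functor $i_s^*$ in place of $\Psi$; you would have to redo work that \cite{Va} already packages. Note also that once you commit to the relative-trace strategy, the DVR reduction in your first paragraph becomes unnecessary: the argument runs directly over the original connected $S$.
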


\section{An $(\B{A}^1,\cdot)$-equivariant case}

\begin{Emp} \label{E:cons}
{\bf Construction.} Fix a scheme $S$ over $k$ and a morphism $\mu:X\times S\to X$.

(a) A correspondence $c:C\to X\times X$ gives rise to the correspondence $c_S={c}_S^{\mu}:C_S\to X_S\times_{S} X_S$ over $S$, where $C_S:=C\times S$ and $X_S:=X\times S$, while $c_{Sl},c_{Sr}:C\times S\to X\times S$ are given by \[c_{Sr}:=c_r\times\Id_S \text{ and }  c_{Sl}:=(\mu,\pr_S)\circ(c_l\times\Id_S),\] that is, $c_{Sl}(y,s)= (\mu(c_l(y),s),s)$ and $c_{Sr}(y,s)=(c_r(y),s)$ for all $y\in C$ and $s\in S$.

(b) For every geometric point $s$ of $S$, we get an endomorphism $\mu_s:=\mu(-,s):X_s\to X_s$. Then $c_s:=i_s^*(c_S)$ is the correspondence
$c_s=(\mu_s\circ c_l,c_r):C_s\to X_s\times X_s$. In particular, for every $s\in S(k)$ we get a correspondence $c_s:C\to X\times X$.

(c) Suppose we are give $\C{F}\in D_{ctf}^b(X,\La)$, a $c$-morphism $u\in\Hom_c(\C{F},\C{F})$ and a morphism $v:\mu^*\C{F}\to\C{F}_S$ in $D_{ctf}^b(X_S,\La)$, where we set $\C{F}_S:=\C{F}\pp\La_S\in D_{ctf}^b(X_S,\La)$. To this data we associate a $c_S$-morphism $u_S\in\Hom_{c_S}(\C{F}_S,\C{F}_S)$, defined as a composition

\[
c_{Sl}^*(\C{F}_S)\simeq (c_l\times\Id_S)^*(\mu^*\C{F})\overset{v}{\to}(c_l\times\Id_S)^*(\C{F}_S)\simeq (c_l^*\C{F})\pp\La_S\overset{u}{\to} (c_r^!\C{F})\pp\La_S\simeq
c_{Sr}^!(\C{F}_S).
\]

(d) For every geometric point $s$ of $S$, morphism $v$ restricts to a morphism $v_s=i_s^*(v):\mu_s^*\C{F}\to\C{F}$, and the $c_s$-morphism
$u_s:=i_s^*(u_S):c_l^*\mu_s^*\C{F}\to c_r^!\C{F}$ decomposes as
\[u_s:c_l^*\mu_s^*\C{F}\overset{v_s}{\lra}c_l^*\C{F}\overset{u}{\lra}c_r^!\C{F}.\]
\end{Emp}

\begin{Emp} \label{E:examples}
{\bf Remarks.} For a morphism $\mu:X\times S\to X$ and a closed point $a\in S$, we set $S^a:=S\sm \{a\}$, and $\mu^a:=\mu|_{X\times S^a}:X\times S^a\to X$. Let $\C{F}\in D_{ctf}^b(X,\La)$ be such that $\mu_a^*\C{F}\simeq 0$. 

(a) Every morphism $v^a:(\mu^a)^*\C{F}\to \C{F}_{S^a}$ uniquely extend to a morphism $v:\mu^*\C{F}\to\C{F}_S$.

Indeed, let $j:X\times S^a\hra X\times S$ and $i:X\times \{a\}\hra X\times S$ be the inclusions.
 Using distinguished triangle $j_!j^*\mu^*\C{F}\to\mu^*\C{F}\to i_*i^*\mu^*\C{F}$ and the assumption that $i^*\mu^*\C{F}\simeq \mu_a^*\C{F}\simeq 0$, we conclude that the map $j_!j^*\mu^*\C{F}\to\mu^*\C{F}$ is an isomorphism.
Therefore the restriction map
\[j^*:\Hom(\mu^*\C{F},\C{F}_S)\to \Hom(j^*\mu^*\C{F},j^*\C{F}_S)\simeq \Hom(j_!j^*\mu^*\C{F},\C{F}_S)\] is an isomorphism, as claimed.

(b) Our assumption $\mu_a^*\C{F}\simeq 0$ implies that
$\Hom_{c_a}(\C{F},\C{F})=\Hom(c_l^*\mu_a^*\C{F},c_r^!\C{F})\simeq 0$. 
\end{Emp}

\begin{Emp} \label{E:equiv case}
{\bf Equivariant case.} (a) Assume that $S$ is an algebraic monoid, acting on $X$, and that $\mu:X\times S\to X$ is the action map. We say that $\C{F}\in D_{ctf}^b(X,\La)$ is {\em weakly $S$-equivariant}, if we are given
a morphism $v:\mu^*\C{F}\to\C{F}_S$ such that $v_1:\C{F}=\mu_1^*\C{F}\to \C{F}$ is the identity.
In particular, the construction of \re{cons} applies, so to every $c$-morphism $u\in\Hom_c(\C{F},\C{F})$ we associate a $c_S$-morphism $u_S\in\Hom_{c_S}(\C{F}_S,\C{F}_S)$.

(b) In the situation of (a), the correspondence $c_1$ equals $c$, and the assumption on $v_1$ implies that
the $c$-morphism $u_1$ equals $u$.
\end{Emp}





\begin{Emp} \label{E:famcorr}
{\bf Basic example.} (a) Let $X$ be a scheme, equipped with an action $\mu:X\times \B{A}^1\to X$ of the algebraic monoid $(\B{A}^1,\cdot)$, let $\mu_0:X\to X$ be the induced (idempotent) endomorphism, and let $Z=Z_X\subseteq X$ be the scheme of $\mu_0$-fixed points, also called the {\em zero section}. Then $Z_X\subseteq X$ is a locally closed subscheme, while $\mu_0:X\to X$ factors as $X\to Z_X\hra X$, thus inducing a projection $\pr_X:X\to Z_X$, whose restriction to $Z_X$ is the identity.

(b) The correspondence $X\mapsto (Z_X\subseteq X\overset{\pr_X}{\lra} Z_X)$ is functorial. Namely, every $(\B{A}^1,\cdot)$-equivariant morphism
$f:X'\to X$ induced a morphism $Z_f:Z_{X'}\to Z_X$ between zero sections, and we have an equality $Z_f\circ \pr_{X'}=\pr_X\circ f$ of morphisms 
$X'\to Z_X$.  

(c) Let $c:C\to X\times X$ be any correspondence. Then the construction of \re{cons} gives rise to a relative correspondence 
$c_{\B{A}^1}:C_{\B{A}^1}\to C_{\B{A}^1}\times C_{\B{A}^1}$ over $\B{A}^1$, hence a family of correspondences $c_t:C\to X\times X$, parameterized by $t\in\B{A}^1(k)$.

(d) For every $t\in\B{A}^1(k)$, the zero section $Z\subseteq X$ is $\mu_t$-invariant, and the induced map $\mu_t|_Z$ is the identity.
Therefore we have an inclusion $\Fix(c|_Z)\subseteq\Fix(c_t|_Z)$ of schemes of fixed points.

(e) For every $t\in\B{G}_m(k)$, we have an equality $\Fix(c_t|_Z)=\Fix(c|_Z)$. Indeed, one inclusion was shown in (d), while the opposite one  follows from first one together with identity  $(c_t)_{t^{-1}}=c$.

(f) Since $\mu_0$ factors through $Z\subseteq X$, we have an equality $\Fix(c_0|_Z)=\Fix(c_0)$.
Moreover, if $Z$ is $c$-invariant, we have an equality  $\Fix(c_0|_Z)=\Fix(c|_Z)$. Indeed, one inclusion was shown in (d), while the opposite one 
follows from the inclusion $\Fix(c_0|_Z)\subseteq c_r^{-1}(Z)=c^{-1}(Z\times Z)$.
\end{Emp}

\begin{Emp} \label{E:twisted}
{\bf Twisted action.} Assume that we are in the situation of \re{famcorr}. For every $n\in\B{N}$, we can consider the $n$-twisted action $\mu(n):X\times \B{A}^1\to X$ of $(\B{A}^1,\cdot)$ on $X$ given by formula $\mu(n)(x,t)=\mu(x,t^n)$. It gives rise to the family of correspondences $c^{\mu(n)}_{t}:C\to X\times X$ such that $c^{\mu(n)}_{t}=c_{t^n}$. Clearly, $\mu(n)$ restricts to an
$n$-twisted action of $\B{G}_m$ on $X$. 
\end{Emp}

\begin{Prop} \label{P:cones}
Let $X$ be an $(\B{A}^1,\cdot)$-equivariant scheme, and let $c:C\to X\times X$ be a correspondence such that $Z=Z_X\subseteq X$ is closed and $c$-invariant, $\Fix(c)\sm\Fix(c|_Z)=\emptyset$ and the set $\{t\in \B{A}^1(k)\,|\,\Fix(c^{\mu}_t)\sm\Fix(c^{\mu}_t|_Z)\neq\emptyset\}$ is finite. 

Then for every weakly $\B{G}_m$-equivariant $\C{F}\in D_{ctf}^b(X,\La)$ (see \re{equiv case}(a)) with respect to the $n$-twisted action (see \re{twisted}) such that $\C{F}|_Z=0$ and every $c$-morphism $u\in\Hom_c(\C{F},\C{F})$, we have $\C{Tr}_c(u)=0$.
\end{Prop}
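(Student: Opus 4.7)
The plan is to embed the given cohomological correspondence $(c,u)$ into a one-parameter family $(c_t,u_t)$ of cohomological correspondences parametrized by $t\in\B{A}^1$, and apply \rp{const} across a connected open containing both $t=0$ (where the ambient $\Hom$-group is zero and hence $u_t=0$) and $t=1$ (where $(c_t,u_t)=(c,u)$), so that $\C{Tr}_c(u)=\C{Tr}_{c_0}(u_0)=0$.

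To build the family, note first that because $\mu_0$ factors through $Z$ and $\C{F}|_Z\simeq 0$, we have $\mu(n)_0^*\C{F}\simeq 0$. Hence \re{examples}(a), applied to the $n$-twisted action with $a=0$, extends the given weak $\B{G}_m$-equivariance datum $v^{\B{G}_m}:\mu(n)^*\C{F}|_{X\times\B{G}_m}\to\C{F}_{\B{G}_m}$ uniquely to a morphism $v:\mu(n)^*\C{F}\to\C{F}_{\B{A}^1}$; \re{cons}(c) then produces a $c^{\mu(n)}_{\B{A}^1}$-morphism $u_{\B{A}^1}$ on $\C{F}_{\B{A}^1}=\C{F}\pp\La_{\B{A}^1}$ whose fiber at $t$ is described by \re{cons}(d). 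At $t=1$ this fiber is $u_1=u$ (since $\mu(n)_1=\Id$ and $v_1=\Id$), and at $t=0$ it lies in $\Hom_{c_0}(\C{F},\C{F})=0$ by \re{examples}(b), so $u_0=0$.

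Next I would choose the open $U\subseteq\B{A}^1$. Let $T:=\{t\in\B{A}^1(k):\Fix(c^{\mu(n)}_t)\sm\Fix(c^{\mu(n)}_t|_Z)\neq\emptyset\}$, which is finite because $c^{\mu(n)}_t=c^\mu_{t^n}$ and the analogous set for $\mu$ is finite by hypothesis. The hypothesis $\Fix(c)=\Fix(c|_Z)$ shows $1\notin T$, and \re{famcorr}(f) combined with $c$-invariance of $Z$ shows $0\notin T$. Set $U:=\B{A}^1\sm T$, a connected open containing $0$ and $1$. Because $\mu(n)_t|_Z=\Id$ for every $t$ (\re{famcorr}(d)), the restriction $c^{\mu(n)}_U|_{Z\times U}$ equals $(c|_Z)\times U$, so $\Fix(c^{\mu(n)}_U|_{Z\times U})=\Fix(c|_Z)\times U$. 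By the defining property of $U$, the closed embedding $\Fix(c^{\mu(n)}_U|_{Z\times U})\hra\Fix(c^{\mu(n)}_U)$ is fiberwise (hence globally) a bijection of underlying sets, so $\Fix(c^{\mu(n)}_U)_{\red}=\Fix(c|_Z)_{\red}\times U$ and $\Fix(c^{\mu(n)}_U)\to U$ is a topologically constant family in the sense of \re{family}.

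Finally, \rp{const} applies to the relative correspondence $c^{\mu(n)}_U$ over the connected base $U$ with the sheaf $\C{F}_U=\C{F}\pp\La_U$ (which is ULA over $U$ as an external pullback from $X$), yielding that $t\mapsto\C{Tr}_{c_t}(u_t)$ is ``constant'' across $U$. Since $u_0=0$ forces $\C{Tr}_{c_0}(u_0)=0$, we conclude $\C{Tr}_c(u)=\C{Tr}_{c_1}(u_1)=0$. The only subtle point is the topologically constant condition in the third paragraph: the finiteness hypothesis on the exceptional set of $t$ is precisely what allows the a priori small piece $\Fix(c|_Z)\times U$ to exhaust, on reductions, the full fixed-point scheme $\Fix(c^{\mu(n)}_U)$ over the enlarged open $U$ containing both $0$ and $1$.
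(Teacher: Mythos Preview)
Your proposal is correct and follows essentially the same approach as the paper's proof: both extend the weak $\B{G}_m$-equivariance to $(\B{A}^1,\cdot)$ via \re{examples}(a), build the family $u_{\B{A}^1}$ via \re{cons}(c), excise the finite bad locus $T$ to obtain a connected open containing $0$ and $1$ over which $\Fix(c^{\mu(n)}_\bullet)$ is topologically constant, and then invoke \rp{const}. Your justification of the equality $\Fix(c^{\mu(n)}_U|_{Z\times U})=\Fix(c|_Z)\times U$ via the identification of correspondences $c^{\mu(n)}_U|_{Z\times U}=(c|_Z)\times U$ is a mild streamlining of the paper's fiberwise check using \re{famcorr}(e),(f), and your explicit mention that $\C{F}_U=\C{F}\boxtimes\La_U$ is ULA over $U$ makes the application of \rp{const} fully transparent.
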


\begin{proof}
Consider the $n$-twisted action $\mu(n):X\times \B{A}^1\to X$, and let $\mu(n)^0:X\times\B{G}_m\to X$ be the induced $n$-twisted action of $\B{G}_m$. The weakly $\B{G}_m$-equivariant structure on $\C{F}$ gives rise to the morphism $v^0:(\mu(n)^0)^*\C{F}\to \C{F}_{\B{G}_m}$ (see \re{equiv case}(a)).

Next, since $\mu(n)_0=\mu_0:X\to X$ factors through $Z$, while $\C{F}|_Z=0$, we conclude that $(\mu(n)_0)^*\C{F}\simeq 0$. Therefore morphism $v^0$ extends uniquely to the morphism
$v:\mu(n)^*\C{F}\to\C{F}_{\B{A}^1}$ (see \re{examples}(a)). Thus by construction \re{cons}(c), our $c$-morphism $u$ gives rise to the $c^{\mu(n)}_{\B{A}^1}$-morphism $u_{\B{A}^1}\in\Hom_{c^{\mu(n)}_{\B{A}^1}}(\C{F}_{\B{A}^1},\C{F}_{\B{A}^1})$
such that $u_1=u$ (see \re{equiv case}(b)).

Notice that since $u_0\in\Hom_{c_0}(\C{F},\C{F})=0$ (see \re{examples}(b)), we have $\C{Tr}_{c_0}(u_0)=0$. We would like to apply \rp{const} to deduce that $\C{Tr}_c(u)=\C{Tr}_{c_1}(u_1)=0$.

Consider the set $T:=\{t\in\B{A}^1(k)\,|\,\Fix(c^{\mu}_{t^n})\sm\Fix(c^{\mu}_{t^n}|_Z)\neq\emptyset\}$. Then $0\notin T$ (by \re{famcorr}(f)), and
our assumption says that $T$ is finite, and $1\notin T$. 
Then $S:=\B{A}^1\sm T\subseteq\B{A}^1$ is an open subscheme, and $0,1\in S$. Let $c^{\mu(n)}_S$ be the restriction of $c^{\mu(n)}_{\B{A}^1}$ to $S$, and it suffices to show that $\Fix(c^{\mu(n)}_S)\to S$ is a topologically constant family, thus \rp{const} applies.

We claim that we have the equality $\Fix(c^{\mu(n)}_S)_{\red}=\Fix(c^{\mu(n)}_S|_{Z\times S})_{\red}=\Fix(c|_Z)_{\red}\times S$ of locally
closed subschemes of $C\times S$. For this it suffices to show that for every $t\in S(k)$ we have equalities
$\Fix(c_{t^n})_{\red}=\Fix(c_{t^n}|_{Z})_{\red}=\Fix(c|_Z)_{\red}$. 
The left equality follows from the identity $\Fix(c^{\mu}_{t^n})\sm \Fix(c^{\mu}_{t^n}|_Z)=\emptyset$ used to define $S$. 
Since $Z$ is $c$-invariant, while the right equality follows from observations \re{famcorr}(e),(f).
\end{proof}

\begin{Emp} \label{E:equivcor}
{\bf Equivariant correspondences.} Let $c:C\to X\times X$ be an {\em $(\B{A}^1,\cdot)$-equivariant correspondence}, by which we mean that both
$C$ and $X$ are equipped with an action of a monoid $(\B{A}^1,\cdot)$, and both projections $c_l,c_r:C\to X$ are $(\B{A}^1,\cdot)$-equivariant.

(a) Note that the subscheme of fixed points $\Fix(c)\subseteq C$ is $(\B{A}^1,\cdot)$-invariant,
correspondence $c$ induces a correspondence $Z_c:Z_C\to Z_X\times Z_X$ between zero sections, and we have an equality $\Fix(Z_c)=Z_{\Fix(c)}$ 
of locally closed subschemes of $C$. 

(b) By \re{cons}(a), correspondence $c$ gives rise to a relative correspondence $c_{\B{A}^1}:C_{\B{A}^1}\to X_{\B{A}^1}\times X_{\B{A}^1}$ over $\B{A}^1$. Equip the monoid $(\B{A}^1,\cdot)$ act on $X_{\B{A}^1}$ and $C_{\B{A}^1}$ by the product of its actions on $X$ and $C$ and the trivial action on $\B{A}^1$. Then $c_{\B{A}^1}$ is an $(\B{A}^1,\cdot)$-equivariant correspondence, and the induced correspondence $Z_{c_{\B{A}^1}}$ between zero sections is the product of $Z_c$ (see (a)) and $\Id_{\B{A}^1}:\B{A}^1\to \B{A}^1\times \B{A}^1$.

(c) Using (b), for every $t\in\B{A}^1(k)$, we get an $(\B{A}^1,\cdot)$-equivariant correspondence $c_t:C\to X\times X$, which satisfy 
$Z_{c_t}=Z_c$ and $Z_{\Fix(c_t)}=\Fix(Z_c)$ (use (a)).   
\end{Emp}

\begin{Emp} \label{E:cones}
{\bf Cones.} Recall (see \re{famcorr}(a)) that for every $(\B{A}^1,\cdot)$-equivariant scheme $X$, there is a natural projection $\pr_X:X\to Z_X$. We say that $X$ is {\em a cone}, if the projection $\pr_X:X\to Z$ is affine. In concrete terms this means that $X\simeq\C{Spec}(\C{A})$, where $\C{A}=\bigoplus_{n=0}^{\infty} \C{A}_n$ is a graded quasi-coherent $\C{O}_Z$-algebra, where $\C{A}_0=\C{O}_Z$, and each $\C{A}_n$ is a coherent $\C{O}_Z$-module. In this case, the zero section $Z_X\subseteq X$ is automatically closed.

(b) In the situation of (a), the open subscheme $X\sm Z_X\subseteq X$ is $\B{G}_m$-invariant, and the quotient $(X\sm Z_X)/\B{G}_m$ is isomorphic to $Proj(\C{A})$ over $Z_X$, hence is proper over $Z_X$.

(c) Notice that if $c:C\to X\times X$ is an $(\B{A}^1,\cdot)$-equivariant correspondence such that $C$ and $X$ are cones, then 
$\Fix(c)$ is a cone as well (compare \re{equivcor}(a)).
\end{Emp}

Our next goal is to show that in some cases the finiteness assumption in \rp{cones} is automatic.

\begin{Lem} \label{L:verygood}
Let $c:C\to X\times X$ be an $(\B{A}^1,\cdot)$-equivariant correspondence over $k$ such that $X$ is a cone with zero section $Z$, $C$ is a cone with zero section $c_r^{-1}(Z)$, and $\Fix(c|_Z)$ is proper over $k$. Then the set $\{t\in \B{A}^1(k)\,|\,\Fix(c_t)\sm\Fix(c_t|_Z)\neq\emptyset\}$ is finite.
\end{Lem}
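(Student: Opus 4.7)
The plan is to realize the set
\[
T:=\{t\in\B{A}^1(k)\,|\,\Fix(c_t)\sm\Fix(c_t|_Z)\neq\emptyset\}
\]
as the image under the projection to $\B{A}^1$ of a scheme proper over $\B{A}^1$; this will force $T$ to be closed in $\B{A}^1$, and an easy check that $0\notin T$ will then force $T$ to be finite.

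First I would pass to the relative correspondence $c_{\B{A}^1}:C_{\B{A}^1}\to X_{\B{A}^1}\times_{\B{A}^1}X_{\B{A}^1}$ of \re{famcorr}(c). By \re{equivcor}(b), $c_{\B{A}^1}$ is itself $(\B{A}^1,\cdot)$-equivariant, and both $X_{\B{A}^1}=X\times\B{A}^1$ and $C_{\B{A}^1}=C\times\B{A}^1$ are cones (with trivial $(\B{A}^1,\cdot)$-action on the $\B{A}^1$-factor), whose zero sections are $Z\times\B{A}^1$ and $Z_C\times\B{A}^1=(c_{\B{A}^1})_r^{-1}(Z\times\B{A}^1)$, respectively. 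By \re{cones}(c), the scheme $\Fix(c_{\B{A}^1})$ is therefore itself a cone.

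Next I would identify the zero section of this cone. For $(y,t)\in Z_C\times\B{A}^1$ one has $c_l(y)\in Z$, on which $\mu_t$ acts trivially; hence $(y,t)$ lies in $\Fix(c_{\B{A}^1})$ iff $c_l(y)=c_r(y)$, iff $y\in\Fix(c|_Z)$. So the zero section of $\Fix(c_{\B{A}^1})$ is $\Fix(c|_Z)\times\B{A}^1$, which by hypothesis is proper over $\B{A}^1$. Applying \re{cones}(b) to the cone $\Fix(c_{\B{A}^1})$ now produces a quotient
\[
P:=\bigl(\Fix(c_{\B{A}^1})\sm(\Fix(c|_Z)\times\B{A}^1)\bigr)/\B{G}_m
\]
which is proper over $\Fix(c|_Z)\times\B{A}^1$, and hence proper over $\B{A}^1$. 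Since $\mu_t$ acts trivially on $Z$ we have $c_t|_Z=c|_Z$ for every $t$, and inspecting fibers shows that the image of $P$ in $\B{A}^1$ is exactly $T$; hence $T$ is closed in $\B{A}^1$.

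To finish, I would invoke \re{famcorr}(f), applied to the $c$-invariant subscheme $Z$ (note that $c$-invariance of $Z$ follows from the $(\B{A}^1,\cdot)$-equivariance of $c_l,c_r$, since then $c_r^{-1}(Z)=Z_C\subseteq c_l^{-1}(Z)$), to conclude $\Fix(c_0)=\Fix(c_0|_Z)=\Fix(c|_Z)$, so $0\notin T$. A proper closed subset of $\B{A}^1$ is finite, yielding the claim. The main content, and the step most worth verifying carefully, is the identification of the zero section of the cone $\Fix(c_{\B{A}^1})$ with $\Fix(c|_Z)\times\B{A}^1$; once this is in hand everything else follows formally from the cone formalism of Section 3.
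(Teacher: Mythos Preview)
Your proposal is correct and follows essentially the same route as the paper: pass to the relative correspondence $c_{\B{A}^1}$, observe that $\Fix(c_{\B{A}^1})$ is a cone with zero section $\Fix(c|_Z)\times\B{A}^1$, use \re{cones}(b) to get a proper quotient over $\B{A}^1$ whose image is the closed set $T$, and conclude finiteness from $0\notin T$ via \re{famcorr}(f). The only cosmetic difference is that the paper identifies the zero section via the formalism of \re{equivcor}(a),(b) (namely $Z_{\Fix(c_{\B{A}^1})}=\Fix(Z_{c_{\B{A}^1}})=\Fix(Z_c)\times\B{A}^1=\Fix(c|_Z)\times\B{A}^1$), whereas you do the same computation by hand.
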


\begin{proof}
We let $c_{\B{A}^1}$ be as in \re{equivcor}(b), and set $\Fix(c_{\B{A}^1})':=\Fix(c_{\B{A}^1})\sm \Fix(c_{\B{A}^1}|_{Z_{\B{A}^1}})\subseteq \Fix(c_{\B{A}^1})$. We have to show that the image of the projection
$\pi:\Fix(c_{\B{A}^1})'\to\B{A}^1$ is a finite set. 

Note that the fiber of $\Fix(c_{\B{A}^1})'$ over $0\in\B{A}^1$ is $\Fix(c_0)\sm \Fix(c_0|_Z)=\emptyset$ (by \re{famcorr}(f)).
It thus suffices to show that the image of $\pi$ is closed. By \re{cones}(c), we conclude that $\Fix(c_{\B{A}})$ is a cone, 
while using \re{equivcor}(a),(b) we conclude that $Z_{\Fix(c_{\B{A}})}=\Fix(c_{\B{A}^1}|_{Z_{\B{A}^1}})=\Fix(c|_Z)\times\B{A}^1$.

It now follows from \re{cones}(b) that the open subscheme $\Fix(c_{\B{A}^1})'\subseteq \Fix(c_{\B{A}^1})$ is $\B{G}_m$-invariant, and that $\pi$ factors through the quotient $\Fix(c_{\B{A}^1})'/\B{G}_m$, which is proper over $\Fix(c|_Z)\times\B{A}^1$. Since $\Fix(c|_Z)$ is proper over $k$  by assumption, the projection $\ov{\pi}:\Fix(c_{\B{A}^1})'/\B{G}_m\to\B{A}^1$ is therefore proper. Hence the image of $\ov{\pi}$ is closed, completing the proof.
\end{proof}

\section{Main result}

\begin{Emp} \label{E:normalcone}
{\bf Normal cones} (compare \cite[1.4.1 and Lem 1.4.3]{Va}).

(a) Recall that to a pair $(X,Z)$, where $X$ is a scheme and $Z\subseteq X$ a closed subscheme, one associates the normal cone
$N_Z(X)$ defined to be $N_Z(X)=\C{Spec}(\bigoplus_{n=0}^{\infty}(\C{I}_Z)^n/(\C{I}_Z)^{n+1})$, where $\C{I}_Z\subseteq\C{O}_X$ is the sheaf of ideals of $Z$. By definition, $N_Z(X)$ is a cone in the sense of \re{cones}, and $Z\subseteq N_Z(X)$ is the zero section.

(b) The assignment $(X,Z)\mapsto (N_Z(X),Z)$ is functorial. Namely, every morphism $f:X'\to X$ such that $Z'\subseteq f^{-1}(Z)$ gives rise to an $(\B{A}^1,\cdot)$-equivariant morphism $N_{Z'}(X')\to N_Z(X)$, whose induced morphism between zero sections is $f|_{Z'}:Z'\to Z$.

(c) By (b), every morphism $f:X'\to X$ induces morphism $N_Z(f):N_{f^{-1}(Z)}(X')\to N_Z(X)$, lifting $f|_Z:f^{-1}(Z)\to Z$. 
Moreover, the induced map $N_{f^{-1}(Z)}(X')\to N_Z(X)\times_Z f^{-1}(Z)$ is a closed embedding, and we have an equality  
$N_Z(f)^{-1}(Z)=f^{-1}(Z)\subseteq N_{f^{-1}(Z)}(X')$.
\end{Emp}

The following standard assertion will be important later.

\begin{Lem} \label{L:red}
Assume that $N_Z(X)$ is set-theoretically supported on the zero section, that is,
$N_Z(X)_{\red}=Z_{\red}$. Then $Z_{\red}\subseteq X_{\red}$ is open.
\end{Lem}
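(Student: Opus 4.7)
The statement is local on $X$, so it suffices to show that every point $z \in Z_{\red}$ admits a Zariski open neighborhood $U \subseteq X$ on which the ideal sheaf $\C{I} := \C{I}_Z$ is nilpotent. Indeed, $\C{I}^N|_U = 0$ would give $V(\C{I}) \cap U = U$ as topological spaces, whence $Z_{\red} \cap U = X_{\red} \cap U$, showing that $Z_{\red}$ contains an open neighborhood of $z$ in $X_{\red}$; since this holds for every $z \in Z_{\red}$, the subset $Z_{\red} \subseteq X_{\red}$ is open.

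The strategy is to decode the hypothesis $N_Z(X)_{\red} = Z_{\red}$ in terms of the graded algebra $\gr_{\C{I}} \C{O}_X = \bigoplus_{n \geq 0} \C{I}^n/\C{I}^{n+1}$. The zero section $Z \hra N_Z(X)$ corresponds to the quotient by the augmentation ideal $J := \bigoplus_{n \geq 1} \C{I}^n/\C{I}^{n+1}$, so the hypothesis asserts that $J$ is contained in the nilradical of $\gr_{\C{I}} \C{O}_X$. Since $X$ is of finite type over $k$, the scheme $N_Z(X)$ is Noetherian, and its nilradical is therefore nilpotent. Hence, after shrinking to an affine open neighborhood of $z$, there exists $N \geq 1$ with $J^N = 0$.

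Now, $J$ is generated as an ideal of $\gr_{\C{I}} \C{O}_X$ by its degree $1$ piece $\C{I}/\C{I}^2$, since $\C{I}^n$ is generated by $n$-fold products of elements of $\C{I}$. Consequently, the degree $N$ piece of the homogeneous ideal $J^N$ is spanned by $N$-fold products of elements of $\C{I}/\C{I}^2$, and this span is exactly $\C{I}^N/\C{I}^{N+1}$. So $J^N = 0$ forces $\C{I}^N = \C{I}^{N+1}$ locally. Passing to the local ring $\C{O}_{X,z}$: the ideal $\C{I}_z$ is finitely generated and lies in $\mm_z$ (because $z \in Z$), so Nakayama's lemma applied to the finitely generated module $\C{I}^N_z$ with $\C{I}^N_z = \C{I}_z \cdot \C{I}^N_z$ gives $\C{I}^N_z = 0$. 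Since $\C{I}^N$ is coherent, its vanishing at the stalk spreads to a Zariski open neighborhood of $z$, as required.

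The only point requiring attention is the identification $(J^N)_N = \C{I}^N/\C{I}^{N+1}$, which translates the nilpotence of the augmentation ideal into the Nakayama-type equality $\C{I}^N = \C{I}^{N+1}$; once this translation is made, the rest is standard commutative algebra.
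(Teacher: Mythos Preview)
Your proof is correct and follows essentially the same route as the paper's: localize to an affine, translate the hypothesis $N_Z(X)_{\red}=Z_{\red}$ into $\C{I}^N=\C{I}^{N+1}$ for some $N$, and then apply Nakayama at each $z\in Z$. The only cosmetic difference is that the paper first replaces $X$ by $X_{\red}$, so that Nakayama yields $(\C{I})_z=0$ directly rather than $(\C{I}^N)_z=0$; your version, which instead concludes that $\C{I}$ is nilpotent on a neighborhood, is equally valid.
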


\begin{proof}
Since the assertion is local on $X$, we can assume that $X$ is affine. Moreover, replacing $X$ by $X_{\red}$, we can assume that $X$ is reduced. Then our assumption implies that there exists $n$ such that
$I_Z^n=I_Z^{n+1}$. Using Nakayama lemma, we conclude that the localization of $I_Z^n$ at every $x\in Z$ is zero. Thus the localization of $I_Z$ at
every point $z\in Z$ is zero, which implies that $Z\subseteq X$ is open, as claimed.
\end{proof}

\begin{Emp} \label{E:corr}
{\bf Application to correspondences.}
(a) Let $c:C\to X\times X$ be a correspondence, and $Z\subseteq X$ a closed subscheme. Then, by \re{normalcone}, correspondence $c$ gives rise to an  $(\B{A}^1,\cdot)$-equivariant correspondence $N_Z(c): N_{c^{-1}(Z\times Z)}(C)\to N_Z(X)\times N_Z(X)$ such that the induced correspondence between zero sections is $c|_Z:c^{-1}(Z\times Z)\to Z\times Z$.

(b) Combining observations \re{cones}(c) and \re{equivcor}(a), we get that $\Fix(N_Z(c))$  is a cone with zero section $\Fix(c|_Z)$. Moreover, $N_{\Fix(c|_Z)}(\Fix(c))$ is closed subscheme of $\Fix(N_Z(c))$ (see \cite[Cor 1.4.5]{Va}).

(c) By \re{equivcor}(b), for every $t\in\B{A}^1(k)$ we get a correspondence
\[
N_Z(c)_t: N_{c^{-1}(Z\times Z)}(C)\to N_Z(X)\times N_Z(X).
\]  
Moreover, every $\Fix(N_Z(c)_t)$  is a cone with zero section $\Fix(c|_Z)$ (use \re{cones}(c) and \re{equivcor}(c)).
\end{Emp}

\begin{Def} \label{D:good}
Let $c:C\to X\times X$ be a correspondence, and let $Z\subseteq X$ be a closed subscheme.

(a) We say that $c$ {\em has no fixed points in the punctured tubular neighborhood of $Z$}, if correspondence $N_Z(c)$ satisfies $\Fix(N_Z(c))\sm \Fix(c|_Z)=\emptyset$.

(b) We say that $c$ {\em has no almost fixed points in the punctured tubular neighborhood of $Z$}, if $\Fix(N_Z(c))\sm \Fix(c|_Z)=\emptyset$, and the set 
$\{t\in\B{A}^1(k)\,|\,\Fix(N_Z(c)_t)\sm\Fix(c|_Z)\neq\emptyset\}$ is finite.
\end{Def}

\begin{Emp} \label{E:remgood}
{\bf Remarks.} (a) The difference $N_{c^{-1}(Z\times Z)}(C)\sm c^{-1}(Z\times Z)$ can be thought as the punctured tubular neighborhood of
$c^{-1}(Z\times Z)\subseteq C$. Therefore our condition \ref{D:good}(a) means that a point  $y\in N_{c^{-1}(Z\times Z)}(C)\sm c^{-1}(Z\times Z)$ is not a fixed point of $N_Z(c)$, that is, $N_Z(c)_l(y)\neq N_Z(c)_r(y)$.

(b) Condition \ref{D:good}(b) means that there exists an open neighbourhood $U\ni 1$ in $\B{A}^1$ such that for every  $y\in N_{c^{-1}(Z\times Z)}(C)\sm c^{-1}(Z\times Z)$ we have $\mu_t(N_Z(c)_l(y))\neq N_Z(c)_r(y)$ for every $t\in U$.
In other words, $y$ is not an {\em almost fixed} point of $N_Z(c)$.

\end{Emp}

\begin{Emp} \label{E:basic}
{\bf The case of a morphism.} (a) Let $f:X\to X$ be a morphism, and let $x\in\Fix(f)$ be a fixed point. We take $c$ be the graph $\Gr_f=(f,\Id_X)$ of $f$, and set $Z:=\{x\}$.

Then $N_x(X):=N_Z(X)$ is a closed conical subset of the tangent space $T_x(X)$, the morphism  $N_x(f):N_x(X)\to N_x(X)$ is $(\B{A}^1,\cdot)$-equivariant, thus $\Fix(N_x(c))=\Fix(N_x(f))$ is a conical subset of $N_x(X)\subseteq T_x(X)$. Thus $\Gr_f$ has no fixed points in the punctured tubular neighborhood of $x$
if and only if set-theoretically $\Fix N_x(f)=\{x\}$.

(b) Let $T_x(f):T_x(X)\to T_x(X)$ be the differential of $f$ at $x$. Then $\Fix T_x(f)=\{x\}$ if and only if the linear map $T_x(f)-\Id:T_x(X)\to T_x(X)$ is invertible, that is, $\Gr_f$ intersects with $\Dt_X$ at $x$ transversally in the strongest possible sense.
In this case, $\Gr_f$ has no fixed points in the punctured tubular neighborhood of $x$ (by (a)).

(c) Assume now that $X$ is smooth at $x$. Then, by (a) and (b), $\Gr_f$ has no fixed points in the punctured tubular neighborhood of $x$ if and only if
$\Gr_f$ intersects with $\Dt_X$ at $x$ transversally.
\end{Emp}

Though the next result is not needed for what follows, it shows that our setting generalizes the one studied in \cite{Va}.

\begin{Lem} \label{L:contr}
Assume that $c$ is contracting near $Z$ in the neighborhood of fixed points in the sense of \cite[2.1.1(c)]{Va}.
Then $c$ has no almost fixed points in the punctured tubular neighborhood of $Z$. Moreover, the subset of $\B{A}^1(k)$, defined in \rd{good}(b), is empty.
\end{Lem}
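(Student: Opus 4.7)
The plan is to exploit the contracting hypothesis to identify all the twisted fixed-point schemes $\Fix(N_Z(c)_t)$ with the untwisted one $\Fix(N_Z(c))$, and then to show that this common set already sits inside $\Fix(c|_Z)$. This will make the set in \rd{good}(b) empty, which in particular yields the weaker ``no almost fixed points'' conclusion.

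First, I would unwind the contracting condition of \cite[2.1.1(c)]{Va} as saying that the left leg $(N_Z(c))_l$ factors (at least in a Zariski neighborhood of $\Fix(c|_Z)$) through the zero section $Z \hookrightarrow N_Z(X)$. The $(\B{A}^1,\cdot)$-action $\mu$ on the cone $N_Z(X)$ fixes $Z$ pointwise, since on the graded algebra $\bigoplus_n \C{I}_Z^n/\C{I}_Z^{n+1}$ defining the cone it acts by $t^n$ in degree $n$, hence trivially in degree $0$, and $Z$ is cut out by the augmentation ideal. Consequently $\mu_t \circ (N_Z(c))_l = (N_Z(c))_l$ for every $t \in \B{A}^1(k)$, so $N_Z(c)_t$ and $N_Z(c)$ have the same pair of legs; in particular $\Fix(N_Z(c)_t) = \Fix(N_Z(c))$ for every $t$.

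Next, I would show $\Fix(N_Z(c)) \subseteq \Fix(c|_Z)$. For a fixed point $y$ of $N_Z(c)$, the common value $(N_Z(c))_r(y) = (N_Z(c))_l(y)$ lies in $Z$ by the contracting hypothesis, so $N_Z(c)(y) \in Z \times Z$. Applying \re{normalcone}(c) to $c : C \to X \times X$ with closed subscheme $Z \times Z \subseteq X \times X$ gives $N_Z(c)^{-1}(Z \times Z) = c^{-1}(Z \times Z)$ inside $N_{c^{-1}(Z\times Z)}(C)$. Hence $y$ lies in the zero section $c^{-1}(Z \times Z)$, and because its image is on the diagonal of $Z$, we get $y \in \Fix(c|_Z)$. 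The reverse inclusion is already built into \re{corr}(b), which identifies $\Fix(c|_Z)$ with the zero section of the cone $\Fix(N_Z(c))$.

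Combining these two steps gives $\Fix(N_Z(c)_t) \setminus \Fix(c|_Z) = \emptyset$ for every $t \in \B{A}^1(k)$, yielding both assertions at once. The only real subtlety, and the main obstacle, is that the contracting factorization of $(N_Z(c))_l$ through $Z$ may only hold on some open neighborhood $U$ of $\Fix(c|_Z)$; but since both steps above only ever invoke the factorization at points whose left projection already lies in $Z$ — and any such point already sits in $U$ — this bookkeeping is harmless.
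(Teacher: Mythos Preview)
Your argument is essentially the paper's: once $N_Z(c)_l$ factors set-theoretically through the zero section $Z$, the identity $\mu_t|_Z=\Id_Z$ forces every $\Fix(N_Z(c)_t)$ to map into $Z\times Z$ under $N_Z(c)$, hence into $c^{-1}(Z\times Z)$ by \re{normalcone}(c), and thus inside $\Fix(c|_Z)$. Your intermediate observation that in fact $N_Z(c)_t=N_Z(c)$ as correspondences is a mild sharpening of what the paper writes, but the content is the same.

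Where you diverge is the handling of the open neighborhood, and there your final paragraph is circular. You claim the factorization is ``only invoked at points whose left projection already lies in $Z$'', but knowing $N_Z(c)_l(y)\in Z$ \emph{is} the factorization --- you cannot assume it before knowing $y\in U$, and there is no independent reason a point of $\Fix(N_Z(c)_t)$ should lie in $U$. The paper avoids this by replacing $c$ with $c|_W$ at the outset, where $W\supseteq\Fix(c)$ is an open on which $c$ is contracting near $Z$ (this is exactly what condition \cite[2.1.1(c)]{Va} provides); since $\Fix(c|_W)=\Fix(c)$, one may then take the set-theoretic factorization of $N_Z(c)_l$ through $Z$ to hold globally, after which your Steps 1--3 go through without caveat.
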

\begin{proof}
Choose an open neighborhood $W\subseteq C$ of $\Fix(c)$ such that $c|_W$ is contracting near $Z$ (see \cite[2.1.1(b)]{Va}). Then $\Fix(c|_W)=\Fix(c)$, hence we can replace
$c$ by $c|_W$, thus assuming that $c$ is contracting near $Z$. In this case, the set-theoretic image of $N_Z(c)_l:N_{c^{-1}(Z\times Z)}(C)\to
N_Z(X)$ lies in the zero section. Therefore for every $t\in\B{A}^1(k)$ the set-theoretic image of the map $\Fix(N_Z(c)_t)\to N_Z(X)$ lies in the zero section, implying the assertion. 
\end{proof}

By \rl{contr}, the following result is a generalization of \cite[Thm 2.1.3(a)]{Va}.

\begin{Lem} \label{L:good}
Let $c:C\to X\times X$ be a correspondence, which has no fixed points in the punctured tubular neighborhood of $Z\subseteq X$.
Then the closed subscheme $\Fix(c|_Z)_{\red}\subseteq\Fix(c)_{\red}$ is open.
\end{Lem}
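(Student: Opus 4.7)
The plan is to apply \rl{red} to the pair $\bigl(\Fix(c),\Fix(c|_Z)\bigr)$. First I would check that $\Fix(c|_Z)$ is a closed subscheme of $\Fix(c)$: since $c^{-1}(Z\times Z)\hookrightarrow C$ is closed (because $Z\times Z\subseteq X\times X$ is closed) and $\Fix(c|_Z)=\Fix(c)\times_C c^{-1}(Z\times Z)$, the inclusion $\Fix(c|_Z)\hookrightarrow\Fix(c)$ is indeed a closed embedding. So by \rl{red}, it suffices to show that the normal cone $N_{\Fix(c|_Z)}(\Fix(c))$ is set-theoretically equal to its zero section, i.e.\ to $\Fix(c|_Z)_{\red}$.

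The key step is to identify $N_{\Fix(c|_Z)}(\Fix(c))$ with a closed subscheme of $\Fix(N_Z(c))$. This is exactly what is recorded in \re{corr}(b): we have a closed embedding
\[
N_{\Fix(c|_Z)}(\Fix(c))\hookrightarrow\Fix(N_Z(c)),
\]
and $\Fix(N_Z(c))$ is a cone whose zero section equals $\Fix(c|_Z)$. Under this embedding the zero section of $N_{\Fix(c|_Z)}(\Fix(c))$ is carried into the zero section of $\Fix(N_Z(c))$.

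Now I would invoke the hypothesis: $c$ has no fixed points in the punctured tubular neighborhood of $Z$, i.e.\ $\Fix(N_Z(c))\setminus\Fix(c|_Z)=\emptyset$. This says precisely that $\Fix(N_Z(c))_{\red}=\Fix(c|_Z)_{\red}$, so set-theoretically $\Fix(N_Z(c))$ coincides with its zero section. Since $N_{\Fix(c|_Z)}(\Fix(c))$ embeds as a closed subscheme of $\Fix(N_Z(c))$ carrying zero section to zero section, we deduce
\[
N_{\Fix(c|_Z)}(\Fix(c))_{\red}=\Fix(c|_Z)_{\red}.
\]
Applying \rl{red} to the pair $\bigl(\Fix(c),\Fix(c|_Z)\bigr)$ then yields that $\Fix(c|_Z)_{\red}$ is open in $\Fix(c)_{\red}$, completing the proof.

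There is no real obstacle here; the main content is the functoriality of the normal cone construction (\re{corr}(b)) together with the nilpotent Nakayama argument packaged as \rl{red}. The only point one should be careful about is that \rl{red} is applied after verifying the closed embedding, and that the identification of zero sections in \re{corr}(b) is the correct one—both of which are immediate from the definitions already laid out in the paper.
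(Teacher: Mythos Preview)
Your proof is correct and follows essentially the same route as the paper: both use the inclusions $\Fix(c|_Z)_{\red}\subseteq N_{\Fix(c|_Z)}(\Fix(c))_{\red}\subseteq \Fix(N_Z(c))_{\red}$ from \re{corr}(b), collapse them via the hypothesis $\Fix(N_Z(c))_{\red}=\Fix(c|_Z)_{\red}$, and then invoke \rl{red}. Your version is slightly more explicit in verifying that $\Fix(c|_Z)\hookrightarrow\Fix(c)$ is a closed embedding, but otherwise the argument is the same.
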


\begin{proof}
Recall that we have inclusions
\[
\Fix(c|_Z)_{\red}\subseteq N_{\Fix(c|_Z)}(\Fix(c))_{\red}\subseteq \Fix(N_Z(c))_{\red}
\]
(by \re{normalcone}(b)), while our assumption implies that we have an equality $\Fix(c|_Z)_{\red}= \Fix(N_Z(c))_{\red}$. Therefore we have an equality 
$\Fix(c|_Z)_{\red}=N_{\Fix(c|_Z)}(\Fix(c))_{\red}$, from which our assertion follows by \rl{red}.
\end{proof}

\begin{Emp}
{\bf Notation}
Let $c:C\to X\times X$ be a correspondence,  which has no fixed points in the punctured tubular neighborhood of $Z\subseteq X$.
Then by \rl{good}, $\Fix(c|_Z)\subseteq\Fix(c)$ is an open subset, thus (see \re{locterms}(b)) to every $c$-morphism $u\in\Hom_c(F,F)$ one can associate an element  \[\C{Tr}_{\Fix(c|_Z)}(u)\in H^0(\Fix(c|_Z),K_{\Fix(c|_Z)}).\]
\end{Emp}

Now we are ready to formulate the main result of this note, which by \rl{contr} generalizes \cite[Thm 2.1.3(b)]{Va}.

\begin{Thm} \label{T:main}
Let $c:C\to X\times X$ be a correspondence, and let $Z\subseteq X$ be a $c$-invariant closed subscheme such that
 $c$ has no fixed points in the punctured tubular neighborhood of $Z$.

(a) Assume that $c$ has no almost fixed points in the punctured tubular neighborhood of $Z$. Then for every $c$-morphism
$u\in\Hom_c(\C{F},\C{F})$, we have an equality
\[
\C{Tr}_{\Fix(c|_Z)}(u)=\C{Tr}_{c|_Z}(u|_Z)\in H^0(\Fix(c|_Z),K_{\Fix(c|_Z)}).
\]

(b) Every connected component $\beta$ of $\Fix(c|_Z)$, which is proper over $k$, is also a connected component of $\Fix(c)$. Moreover, for every $c$-morphism $u\in\Hom_c(\C{F},\C{F})$, we have an equality 
\[
LT_{\beta}(u)=LT_{\beta}(u|_Z).
\]
\end{Thm}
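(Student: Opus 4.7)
I will deduce part (b) from part (a). By \rl{good}, $\Fix(c|_Z)_{\red}$ is open in $\Fix(c)_{\red}$, so a connected component $\beta$ of $\Fix(c|_Z)$ that is proper over $k$ is also closed in $\Fix(c)$ (by properness and separatedness), hence clopen in $\Fix(c)_{\red}$ and thus a connected component of $\Fix(c)$ in the sense of \re{locterms}(d). The identity $LT_\beta(u)=LT_\beta(u|_Z)$ then follows from part (a) by restricting both $\C{Tr}_{\Fix(c|_Z)}(u)$ and $\C{Tr}_{c|_Z}(u|_Z)$ to $\beta$ and applying $\pi_{\beta!}$.

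For part (a), the first step is to reduce to the case $\C{F}|_Z\simeq 0$. Apply additivity of $\C{Tr}_c$ (in the variable $\C{F}$) to the distinguished triangle $j_!j^*\C{F}\to\C{F}\to i_*i^*\C{F}\to$, where $i:Z\hra X$ and $j:X\sm Z\hra X$. By the standard compatibility of trace maps with closed pushforward, the contribution of $i_*i^*\C{F}$ to $\C{Tr}_{\Fix(c|_Z)}(u)$ coincides with $\C{Tr}_{c|_Z}(u|_Z)$. It therefore suffices to show that $\C{Tr}_{\Fix(c|_Z)}(u)=0$ whenever $\C{F}|_Z\simeq 0$.

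The second step is to pass to the normal cone. Using the deformation space $D_Z(X)\to\B{A}^1$, form a relative correspondence $\wt c$ over $\B{A}^1$ whose fiber at $t\neq 0$ is $c$ and whose fiber at $0$ is $N_Z(c)$, together with a sheaf $\wt{\C F}$ interpolating $\C{F}$ and its specialization $sp_Z(\C{F})$. The subset $B=\Fix(\wt c)\sm(\Fix(c|_Z)\times\B{A}^1)\subseteq\wt C$ is closed, because $\Fix(c|_Z)$ is open in $\Fix(c)=\Fix(\wt c_1)$ by \rl{good} and empty in $\Fix(N_Z(c))=\Fix(\wt c_0)$ by \rd{good}(a); after replacing $\wt c$ by its restriction to $\wt W:=\wt C\sm B$ and using $c$-invariance of $Z$ together with the triviality of $D_Z(X)$ over $Z$, the fixed locus $\Fix(\wt c|_{\wt W})\to\B{A}^1$ becomes topologically constant equal to $\Fix(c|_Z)\times\B{A}^1$. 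Invoking the ULA property of $\wt{\C F}$ provided by Verdier's construction of specialization, \rp{const} reduces the problem to showing $\C{Tr}_{N_Z(c)}(sp_Z(u))=0$ on the cone $N_Z(X)$, where now $sp_Z(\C{F})|_Z\simeq\C{F}|_Z=0$.

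The final step uses Verdier's theorem \cite{Ve}: $sp_Z(\C{F})$ is monodromic on $N_Z(X)$, and after the standard inverse-limit reduction to finite $\La$-coefficients, monodromicity amounts to weak $\B{G}_m$-equivariance of $sp_Z(\C{F})$ with respect to the $n$-twisted action for some $n\geq 1$. Since $Z$ is $N_Z(c)$-invariant (by \re{normalcone}) and our assumption that $c$ has no almost fixed points in the punctured tubular neighborhood of $Z$ is, by \rd{good}(b), exactly the finiteness hypothesis of \rp{cones} applied to $N_Z(c)$, we apply \rp{cones} to conclude $\C{Tr}_{N_Z(c)}(sp_Z(u))=0$. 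The main technical obstacle is the specialization step: one must verify the ULA property of $\wt{\C F}$ needed by \rp{const}, check that passage to the open restriction $\wt W$ does not alter $\C{Tr}_{\Fix(c|_Z)}(u)$, and ensure that the inverse-limit reduction to finite coefficients is compatible with both the trace map and the monodromicity statement.
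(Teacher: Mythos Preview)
Your proposal captures the correct overall architecture, but there are two genuine gaps.

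\textbf{Part (b).} You invoke part (a) directly, but part (a) carries the extra hypothesis that $c$ has no \emph{almost} fixed points in the punctured tubular neighborhood of $Z$; part (b) only assumes the weaker ``no fixed points'' condition. The paper closes this gap by first restricting to an open $W\subseteq C$ so that $\Fix(c|_Z)$ becomes proper over $k$, and then applying \rl{verygood}: properness of $\Fix(c|_Z)=\Fix(N_Z(c)|_Z)$ forces the finiteness condition of \rd{good}(b) automatically. Without this step your deduction of (b) from (a) is circular.

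\textbf{Part (a), the specialization step.} Your plan is to build an interpolating sheaf $\wt{\C F}$ on the total deformation space $D_Z(X)$ that is ULA over $\B{A}^1$, with fiber $\C F$ at $t\neq 0$ and fiber $sp_Z(\C F)$ at $t=0$, and then feed this into \rp{const}. Such a sheaf does not exist in general: the natural candidate, the pullback of $\C F$ along $D_Z(X)\to X$, has special fiber $\pr_X^*(\C F|_Z)$, not $sp_Z(\C F)$; and the very point of the nearby-cycle construction is to capture what happens when the family fails to be ULA. The paper avoids this by using a different device for the passage to the normal cone: the commutative diagram \form{spec}, which is \cite[Prop.~1.3.5]{Va} applied to the deformation $\wt{X}_Z\to\C D$. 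This diagram compares $\C{Tr}_c$ with $\C{Tr}_{N_Z(c)}$ via the specialization map $sp_{\Fix(\wt c_Z)}$ without any ULA hypothesis on $\C F$; one only needs $sp_{\Fix(\wt c_Z)}$ to be an isomorphism, which follows (after the open restriction you also perform) from $\Fix(\wt c_Z)\to\C D$ being a topologically constant family. \rp{const} is then used only \emph{inside} \rp{cones}, on the cone $N_Z(X)$, where the relevant sheaf is the trivially ULA external product $sp_Z(\C F)\boxtimes\La_{\B{A}^1}$. Your final step (Verdier monodromicity plus \rp{cones}) is correct and matches the paper.
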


As an application, we now deduce the result, stated in the introduction.

\begin{Cor} \label{C:main}
Let $f:X\to X$ be a morphism, and let $x\in\Fix(f)$ be a fixed point such that the induced map of normal cones $N_x(f):N_x(X)\to N_x(X)$ has no non-zero fixed points. Then:

(a) Point $x$ is an isolated fixed point of $f$.

(b) For every morphism $u:f^*\C{F}\to\C{F}$ with $\C{F}\in D_{ctf}^b(X,\La)$, we have an equality
$LT_{x}(u)=\Tr(u_x)$. In particular, if $\C{F}=\La$ and $u$ is the identity, then $LT_{x}(u)=1$.
\end{Cor}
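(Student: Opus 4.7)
The plan is to apply \rt{main}(b) to the graph correspondence $c:=\Gr_f=(f,\Id_X):X\to X\times X$ together with the closed subscheme $Z:=\{x\}$. Since $f(x)=x$, the subscheme $\{x\}$ is $c$-invariant in the sense of \re{restr}(c): indeed $c_r=\Id_X$ gives $c_r^{-1}(\{x\})=\{x\}\subseteq f^{-1}(\{x\})=c_l^{-1}(\{x\})$. By \re{basic}(a), the hypothesis that $N_x(f):N_x(X)\to N_x(X)$ has no non-zero fixed points is exactly the statement that $c$ has no fixed points in the punctured tubular neighborhood of $\{x\}$.

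To upgrade this to the ``no almost fixed points'' condition of \rd{good}(b), I would invoke \rl{verygood} applied to the $(\B{A}^1,\cdot)$-equivariant correspondence $N_Z(c):N_x(X)\to N_x(X)\times N_x(X)$. Its target $N_x(X)$ is a cone with zero section $\{x\}$; its source $N_{c^{-1}(\{x\}\times\{x\})}(X)=N_x(X)$ is likewise a cone, whose zero section equals $c_r^{-1}(\{x\})=\{x\}$; and $\Fix(c|_Z)=\{x\}$ is trivially proper over $k$. Hence \rl{verygood} supplies the required finiteness of the locus $\{t\in\B{A}^1(k)\,|\,\Fix(N_Z(c)_t)\sm\{x\}\neq\emptyset\}$.

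With these hypotheses in place, \rt{main}(b) applied with $\beta=\{x\}$ (the unique connected component of $\Fix(c|_Z)=\{x\}$, obviously proper) yields both conclusions at once. Part (a) is the assertion that $\{x\}$ is a connected component of $\Fix(c)=\Fix(f)$, so $x$ is isolated. For part (b), the theorem gives $LT_x(u)=LT_x(u|_Z)$, and it remains only to identify the right-hand side: since $c|_Z$ is the identity correspondence on the point $\{x\}$, the $c|_Z$-morphism $u|_Z$ furnished by \re{cohcorr}(d) is literally the endomorphism $u_x:\C{F}_x\to\C{F}_x$, whose local term in the sense of \re{locterms} is the ordinary trace $\Tr(u_x)$. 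The final assertion is then immediate: for $\C{F}=\La$ and $u=\Id$, one has $u_x=\Id_\La$, so $\Tr(u_x)=1$.

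The only delicate point in the plan is the passage from ``no fixed points'' to ``no almost fixed points'': the hypothesis concerns $N_x(f)$ alone, not the scaled family $t\cdot N_x(f)$, and a priori the latter could acquire spurious non-zero fixed points for $t$ near $1$. This potential obstacle is precisely what \rl{verygood} addresses, using the cone structure together with the properness of $\Fix(c|_Z)$ (here the one-point scheme $\{x\}$) to force the ``bad'' $t$-locus in $\B{A}^1$ to be finite.
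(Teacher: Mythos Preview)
Your proof is correct and follows essentially the same route as the paper: apply \rt{main}(b) to the graph correspondence $c=\Gr_f$ with $Z=\{x\}$, and then identify $LT_x(u|_Z)$ with the naive trace $\Tr(u_x)$. The paper derives part~(a) from \rl{good} rather than from the first clause of \rt{main}(b), but these are equivalent here.

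One remark: your explicit appeal to \rl{verygood} to upgrade from ``no fixed points'' to ``no almost fixed points'' is unnecessary. The statement of \rt{main}(b) only assumes the weaker condition that $c$ has no fixed points in the punctured tubular neighborhood of $Z$; the passage through \rl{verygood} is carried out \emph{inside} the proof of \rt{main}(b), using properness of $\beta$. So you may simply invoke \rt{main}(b) directly once you have verified the ``no fixed points'' hypothesis via \re{basic}(a), without the intermediate paragraph.
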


\begin{proof}
As it was observed in \re{basic}(a), the assumption implies that $\{x\}\subseteq X$ is a closed $\Gr_f$-invariant subscheme, and 
correspondence $\Gr_f$ has no fixed points in the punctured tubular neighborhood of $\{x\}$. Therefore part (a) follows from \rl{good}, while the first assertion of (b) is an immediate corollary of \rt{main}. The second assertion of (b) now follows from the obvious observation that $\Tr(u_x)=1$.
\end{proof}

\section{The case of group actions}

\begin{Lem} \label{L:diagonal}
Let $D$ be a diagonalizable algebraic group acting on a scheme $X$, and let $Z\subseteq X$ be a $D$-invariant closed subscheme. Then $D$ acts on the normal cone $N_Z(X)$, and the induced morphism $N_{Z^D}(X^D)\to N_Z(X)^D$ on $D$-fixed points is an isomorphism.
\end{Lem}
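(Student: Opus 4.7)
The plan is to work locally on $X$ and exploit the exactness of the $D$-invariants functor on $D$-representations, which is the defining feature of diagonalizable groups. Since the claim is local on $X$, I reduce to the affine case $X=\Spec A$ with $I=\C{I}_Z$ a $D$-stable ideal. Because $D$ is diagonalizable, every rational $D$-module splits as $M=\bigoplus_\chi M^\chi$, so $M\mapsto M^D=M^0$ is an exact functor on $D$-modules. The $D$-stability of $I$ propagates to each $I^n$, so the graded pieces $I^n/I^{n+1}$ carry a natural $D$-structure and $\gr_I A$ is a $D$-graded algebra. In particular $D$ acts on $N_Z(X)=\Spec(\gr_I A)$, proving the first assertion.

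For the isomorphism, the idea is to identify both sides as quotients of the same graded algebra. Let $J_A\subseteq A$ denote the ideal generated by the nonzero-weight spaces $A^\chi$ for $\chi\neq 0$, and set $B=A/J_A$. Then $X^D=\Spec B$, the ring $B$ carries the trivial $D$-action, and $Z^D\subseteq X^D$ is cut out by $IB$, so $N_{Z^D}(X^D)=\Spec(\gr_{IB}B)$. Functoriality of the normal cone gives a $D$-equivariant surjection of graded rings $\gr_I A\twoheadrightarrow\gr_{IB}B$; since the target is $D$-trivial, it factors uniquely through the maximal $D$-trivial quotient $(\gr_I A)/J_{\gr_I A}$, producing a surjection
\[
\varphi\colon (\gr_I A)/J_{\gr_I A}\twoheadrightarrow \gr_{IB}B
\]
whose $\Spec$ is the morphism $N_{Z^D}(X^D)\to N_Z(X)^D$ appearing in the lemma.

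It remains to show that $\varphi$ is injective in each graded degree $n$. Applying exactness of $(-)^D$ to the short exact sequence $0\to I^{n+1}\to I^n\to I^n/I^{n+1}\to 0$ gives $(I^n/I^{n+1})^D=(I^n)^D/(I^{n+1})^D$, and the splitting $A=A^D\oplus A^{\neq 0}$, together with the inclusion $(I^n)^{\neq 0}\subseteq J_A$, identifies the target $(IB)^n/(IB)^{n+1}$ with $((I^n)^D+J_A)/((I^{n+1})^D+J_A)$. Comparing these two descriptions, the kernel of $\varphi$ in degree $n$ becomes $((I^n)^D\cap J_A)/((I^{n+1})^D\cap J_A)$, and one must show this coincides with the weight-$0$ part of $(J_{\gr_I A})_n$; concretely, that a weight-$0$ element of $I^n$ lying in $J_A$ is rewritable, modulo $(I^{n+1})^D$, as a sum of products $\xi\cdot\eta$ in $\gr_I A$ with $\xi,\eta$ of opposite nonzero $D$-weights.

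The main obstacle will be producing exactly this decomposition: one needs to factor a weight-$0$ element of $I^n$ lying in the nonzero-weight ideal $J_A$ into pieces compatible with the $I$-adic filtration while preserving the weight structure. This is where the diagonalizability hypothesis on $D$ is essentially used, since it is the canonical splitting $A=\bigoplus_\chi A^\chi$ and its exactness with respect to all subquotients in sight that allows one to commute the bookkeeping between weight decomposition, $I$-adic filtration, and the passage from $A$ to $B$.
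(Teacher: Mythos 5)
Your setup is reasonable and correctly reduces the problem to a concrete algebraic statement, but you explicitly leave the crucial step unresolved (``the main obstacle will be producing exactly this decomposition\dots''), so the proposal as written is not a proof. The paper itself takes a different route: it first treats the case of a finite group $D$ of order prime to the characteristic using exactness of the \emph{coinvariants} functor $M\mapsto M_D$, and then reduces a general diagonalizable $D$ to a finite subgroup $D'\subseteq D$ via Zariski density of torsion elements. By contrast you work with a general diagonalizable $D$ directly and use \emph{invariants}, which is a legitimate variant, but it leads you to exactly the same stumbling block.

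The point at which you stop is not a minor technicality; it is the entire content of the lemma, and I would push back on the implicit expectation that the ``bookkeeping'' will come out right. After your reductions, injectivity of $\varphi$ in degree $n$ amounts to a containment of the shape $(I^n)^D\cap J_A\subseteq (I^{n+1})^D + (J_{\gr_I A}^0\text{-part})$, i.e.\ roughly that a weight-zero element of $I^n$ that also lies in the ideal generated by nonzero-weight elements must die, modulo $I^{n+1}$, in the graded ring. Try this on $D=\mu_2$ acting on $X=\B{A}^1=\Spec k[x]$ by $x\mapsto -x$, with $Z=V(x^2)$: here $I=(x^2)$ has a weight-zero generator, $\gr_I A\cong (k[x]/(x^2))[T]$ with $T=\ov{x^2}$ of weight $0$, so $(\gr_I A)/J_{\gr_I A}\cong k[T]$, while $X^D=Z^D=\Spec k$ so $\gr_{IB}B\cong k$. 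The element $\ov{x^2}$ is weight-zero, lies in $J_A=(x)$, and is a nonzero class in degree $1$, yet it cannot be rewritten as a sum of products of opposite nonzero weights in $\gr_I A$. So the factorization you ask for genuinely fails when the ideal sheaf of $Z$ has weight-zero generators that are products of nonzero-weight elements of the ambient ring. This is precisely the phenomenon your last paragraph worries about, and it is not a bookkeeping issue. (The paper's own proof elides the same point with ``the rest is easy''; if you pursue this, it is worth checking that the intended applications only invoke the lemma in situations where $Z$ is reduced or, more to the point, where the generators of $I_Z$ retain nonzero weight.)
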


\begin{proof}
By the functoriality of the normal cone (see \re{normalcone}(b)), $D$ acts on the normal cone $N_Z(X)$, so it remains to show that the map $N_{Z^D}(X^D)\to N_Z(X)^D$ is an isomorphism.

Assume first that $D$ is a finite group of order prime to the characteristic of $k$. Note that
every $z\in Z^D$ has a $D$-invariant open affine neighbourhood $U\subseteq X$. Thus, replacing $X$ by $U$ and $Z$ by $Z\cap U$, we can assume that $X$ and $Z$ are affine. Then we have to show that the map $k[N_Z(X)]_D\cong k[N_Z(X)^D]\to k[N_{Z^D}(X^D)]$ is an isomorphism.

The latter assertion follows from the fact that the functor of coinvariants $M\mapsto M_D$ is exact on $k[D]$-modules. Namely, the exactness of $(-)_D$ implies that the isomorphism $k[X]_D\isom k[X^D]$ induces
an isomorphism $(I_Z)_D\isom I_{Z^D}$, and the rest is easy.

To show the case of a general $D$, notice that the set of torsion elements $D_{\operatorname{tor}}\subseteq D$ is Zariski dense. Since $X,Z$ and $N_Z(X)$ are Noetherian, therefore there exists a finite subgroup $D'\subseteq D$ such that $X^D=X^{D'}$ and similarly for $Z$ and $N_Z(X)$. Therefore
the assertion for $D$ follows from that for $D'$, shown above.
\end{proof}

\begin{Cor} \label{C:diagonal}
Let $D$ be as in \rl{diagonal}, let $g\in D$, and let $Z\subseteq X$ be a $g$-invariant closed subscheme. Then $g$ induces an endomorphism of the normal cone $N_Z(X)$, and the induced morphism $N_{Z^g}(X^g)\to N_Z(X)^g$ on $g$-fixed points is an isomorphism.
\end{Cor}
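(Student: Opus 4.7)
The plan is to deduce the Corollary from \rl{diagonal} applied not to the entire group $D$, but to the subgroup $D_g \subseteq D$ defined as the scheme-theoretic Zariski closure of the cyclic subgroup generated by $g$. Being a closed subgroup of a diagonalizable group, $D_g$ is itself a diagonalizable algebraic group acting on $X$ by restriction; in particular, the functoriality of the normal cone (\re{normalcone}(b)) supplies the endomorphism of $N_Z(X)$ induced by $g$.

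Next I would verify the hypotheses of \rl{diagonal} for the action of $D_g$ on the pair $(X,Z)$. The locus of elements $h \in D$ preserving $Z$ (that is, such that $h$ carries the ideal sheaf $\C{I}_Z$ into itself) is a closed subgroup scheme of $D$; since it contains $g$, it also contains the closure $D_g$, so $Z$ is $D_g$-invariant.

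The main step will be the scheme-theoretic identifications $X^g = X^{D_g}$, $Z^g = Z^{D_g}$, and $N_Z(X)^g = N_Z(X)^{D_g}$. For each, the pointwise-stabilizer inside $D$ of the corresponding closed subscheme is itself a closed subgroup scheme of $D$ containing $g$, hence containing $D_g$; this gives the nontrivial inclusion, while the opposite inclusion is tautological. Once these are in hand, applying \rl{diagonal} to the $D_g$-action produces $N_{Z^{D_g}}(X^{D_g}) \overset{\sim}{\to} N_Z(X)^{D_g}$, which via the above identifications is the desired isomorphism $N_{Z^g}(X^g) \overset{\sim}{\to} N_Z(X)^g$.

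The main obstacle will be setting up the representability of the pointwise-stabilizer of a closed subscheme by a closed subgroup scheme of $D$ at the required schematic level, together with the analogous statement for the setwise-stabilizer used in the second paragraph; once these are granted, the reduction from $g$ to $D_g$ is essentially formal, and the real content sits in \rl{diagonal}.
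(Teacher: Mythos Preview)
Your proposal is correct and follows essentially the same route as the paper: define $D' := \overline{\langle g\rangle}\subseteq D$, observe it is diagonalizable, note the equalities $X^g=X^{D'}$, $Z^g=Z^{D'}$, $N_Z(X)^g=N_Z(X)^{D'}$, and apply \rl{diagonal} to $D'$. The paper simply asserts these fixed-point equalities without the stabilizer discussion you sketch, so your version is slightly more detailed but not substantively different.
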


\begin{proof}
Let $D':=\ov{\lan g\ran}\subseteq D$ be the Zariski closure of the cyclic group $\lan g\ran\subseteq D$. Then $D'$ is a diagonalizable group, and
we have an equality $X^g=X^{D'}$ and similarly for $Z^g$ and $N_Z(X)^g$. Thus the assertion follows from \rl{diagonal} for $D'$.
\end{proof}

\begin{Emp} \label{E:appl}
{\bf Example.} Let $g:X\to X$ be an automorphism of finite order, which is prime to the characteristic of $k$.
Then the cyclic group $\lan g\ran\subseteq\Aut(X)$ is a diagonalizable group, thus \rco{diagonal} applies in this case. So for every $g$-invariant closed subscheme $Z\subseteq X$, the natural morphism $N_{Z^g}(X^g)\to N_Z(X)^g$ is an isomorphism.
\end{Emp}

As a consequence we get a class of examples, when the condition of \rd{good}(a) is satisfied.

\begin{Cor} \label{C:dl}
Let $G$ be a linear algebraic group acting on $X$.

(a) Let $g\in G$, let $\ov{\lan g\ran}$ be the Zariski closure of the cyclic group generated by $g$, let $s\in\ov{\lan g\ran}$ be a semisimple element, and let $Z\subseteq X$ be an $s$-invariant closed subscheme such that
$(X\sm Z)^s=\emptyset$. Then $g$ has no fixed points in the punctured tubular neighborhood of $Z$.

(b) Let $g\in G$ be semisimple, and let $Z\subseteq X$ be a $g$-invariant closed subscheme such that
$(X\sm Z)^g=\emptyset$. Then $g$ has no fixed points in the punctured tubular neighborhood of $Z$.
\end{Cor}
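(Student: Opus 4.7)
I begin by reducing (b) to (a). Taking $s=g$ in (a), the hypotheses match: $g$ is semisimple and $g\in\overline{\langle g\rangle}$, while the $g$-invariance of $Z$ in (b) is exactly the $s$-invariance required. So I focus on (a), taking $Z$ to be $g$-invariant (the hypothesis in (a) is most naturally read as $Z$ being $\overline{\langle g\rangle}$-invariant, whence both $g$- and $s$-invariance follow; this is automatic in (b)). Setting $c=\Gr_g$, the $g$-invariance of $Z$ gives $c^{-1}(Z\times Z)=Z$ and $\Fix(c|_Z)=Z^g$, while $N_Z(c)$ becomes the graph of the induced endomorphism $N_Z(g):N_Z(X)\to N_Z(X)$, so $\Fix(N_Z(c))=N_Z(X)^g$. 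Thus the condition in \rd{good}(a) amounts to the set-theoretic equality $N_Z(X)^g=Z^g$; only the inclusion $\subseteq$ requires proof.

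The key step is a reduction to the semisimple element $s$. Because $\langle g\rangle$ is Zariski dense in $\overline{\langle g\rangle}$ and fixed-point schemes are closed, any $\overline{\langle g\rangle}$-equivariant scheme $Y$ satisfies $Y^g=Y^{\overline{\langle g\rangle}}\subseteq Y^s$. Applied to $Y=N_Z(X)$, this yields $N_Z(X)^g\subseteq N_Z(X)^s$. Now \rco{diagonal} applied to $s$ (which is semisimple and preserves $Z$) identifies $N_Z(X)^s\simeq N_{Z^s}(X^s)$.

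Finally, I exploit the hypothesis $(X\sm Z)^s=\emptyset$, which forces $X^s\subseteq Z$ set-theoretically; so $Z^s\hookrightarrow X^s$ has the same underlying topological space, i.e., it is a nilpotent thickening with ideal of definition $I$ satisfying $I^N=0$ locally. Then the graded algebra $\bigoplus_{n\ge 0}I^n/I^{n+1}$ has nilpotent positive-degree part, so every prime ideal contains it, and $\Spec$ of this algebra coincides set-theoretically with $Z^s$. Combining, $N_Z(X)^g\subseteq N_Z(X)^s=Z^s$ set-theoretically, so $N_Z(X)^g$ is contained in the zero section $Z\subseteq N_Z(X)$; there the $g$-action is just $g|_Z$, and intersecting with $Z$ yields $N_Z(X)^g=Z^g$ set-theoretically, completing (a). The main technical ingredient is the observation about normal cones of nilpotent thickenings being supported on the zero section; the rest is a direct application of \rco{diagonal} together with the Zariski density of $\langle g\rangle$ in $\overline{\langle g\rangle}$.
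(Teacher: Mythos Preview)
Your proof is correct and follows essentially the same route as the paper's: pass from $g$ to $s$ via the inclusion $N_Z(X)^g\subseteq N_Z(X)^s$, identify $N_Z(X)^s\simeq N_{Z^s}(X^s)$ by \rco{diagonal}, and then use that $(X^s)_{\red}=(Z^s)_{\red}$ forces $N_{Z^s}(X^s)_{\red}=(Z^s)_{\red}$. Your explicit argument that the normal cone of a nilpotent thickening is supported on the zero section, and your observation that the paper's proof implicitly requires $Z$ to be $g$-invariant (not merely $s$-invariant) for $N_Z(X)^g$ to make sense, are both apt elaborations of points the paper leaves tacit.
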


\begin{proof}
(a) We have to show that $N_Z(X)^g\sm Z=\emptyset$. By assumption, we have $N_Z(X)^g\subseteq N_Z(X)^s$. Therefore, it suffices to show that $N_Z(X)^s\sm Z=N_Z(X)^s\sm Z^s=\emptyset$. Since $s$ is semisimple, we conclude from \rco{diagonal} that $N_Z(X)^s=N_{Z^s}(X^s)$. Since $(X^s)_{\red}=(Z^s)_{\red}$, by assumption, we conclude that  $N_{Z^s}(X^s)_{\red}=(Z^s)_{\red}$, implying the assertion.

(b) is a particular case of (a).
\end{proof}

\begin{Emp} \label{E:jordan}
{\bf Example.}
An important particular case of \rco{dl}(a) is when $s=g_s$ is the semisimple part of $g$, that is, $g=g_sg_u$ is the Jordan decomposition.
\end{Emp}

The following result gives a version of \rco{main}, whose assumptions are easier to check.

\begin{Cor} \label{C:eqlocterms}
Let $G$ and $g\in G$ be as in \rco{dl}(b), and let $x\in X^g$ be an isolated fixed point of $g$.
Then the induced map of normal cones $g:N_x(X)\to N_x(X)$ has no non-zero fixed points. Therefore
for every morphism $u:g^*\C{F}\to\C{F}$ with $\C{F}\in D_{ctf}^b(X,\La)$, we have an equality
\[
LT_{x}(u)=\Tr(u_x).
\]
\end{Cor}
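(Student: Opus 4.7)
The plan is to reduce the substantive content of the corollary to \rco{main} by verifying its single hypothesis, namely that $N_x(g):N_x(X)\to N_x(X)$ has no non-zero fixed points. Once that is in place, \rco{main}(b) delivers $LT_x(u)=\Tr(u_x)$ directly, so all the work lies in establishing the normal-cone condition.

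The first step is to exploit the hypothesis that $x$ is an isolated point of $X^g$ in order to pass to a $g$-invariant open neighbourhood $U$ of $x$ with $U^g=\{x\}$ set-theoretically. Concretely, since $X^g$ is closed in $X$ and $\{x\}$ is open in $X^g$ by the isolation hypothesis, the set $U:=X\sm(X^g\sm\{x\})$ is open in $X$; it is $g$-invariant because $g$ acts on $X$ as an automorphism fixing $x$, and the ambient linear algebraic group may be replaced by the diagonalizable closure $\ov{\lan g\ran}$, which still acts on $U$. This replacement is harmless: the normal cone $N_x(X)=N_x(U)$ is a local invariant, the naive local term $\Tr(u_x)$ is computed fibrewise at $x$, and the true local term $LT_x(u)$ depends only on the restriction of the trace map to the open piece $\{x\}\subseteq\Fix(\Gr_g)$ (see \re{locterms}(b)).

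Having reduced to the situation $X^g=\{x\}$ set-theoretically, I would apply \rco{dl}(b) with $Z=\{x\}$: since $g$ is semisimple and $(X\sm\{x\})^g=\emptyset$ by construction, the corollary asserts that $\Gr_g$ has no fixed points in the punctured tubular neighborhood of $\{x\}$. By the discussion in \re{basic}(a), in the graph-of-a-morphism case this condition is precisely the statement that the only fixed point of $N_x(g)$ is the vertex, which is exactly the non-zero-fixed-points hypothesis required by \rco{main}.

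With the normal-cone hypothesis verified, \rco{main}(b) yields the desired identity $LT_x(u)=\Tr(u_x)$ and completes the argument. I do not anticipate any serious obstacle: the corollary is essentially a packaging of \rco{dl}(b) together with \rco{main}, and the only bookkeeping point is the legitimacy of the localization from $X$ to $U$, which is routine given that $N_x(X)$, $\Tr(u_x)$, and $LT_x(u)$ are all purely local invariants at $x$.
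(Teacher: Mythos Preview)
Your proof is correct and follows essentially the same route as the paper, which simply records that the first assertion follows from \rco{dl}(b) and the second from \rco{main}(b). Your contribution is to make explicit the localization step needed to apply \rco{dl}(b) literally (since that corollary requires $(X\sm Z)^g=\emptyset$, which is not given for $Z=\{x\}$ without first shrinking $X$), and to justify why this localization is harmless; the paper leaves this to the reader.
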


\begin{proof}
The first assertion follows from \rco{dl}(b), while the second one follows from \rco{main}(b).
\end{proof}

\begin{Emp} \label{E:KW}
{\bf An application.} \rco{eqlocterms} is used in the work of D. Hansen, T. Kaletha and J. Weinstein (see \cite[Prop. 5.6.2]{HKW}).
\end{Emp}

As a further application, we get a slight generalization of the Deligne--Lusztig trace formula.

\begin{Emp} \label{E:not}
{\bf Notation.} To every proper endomorphism $f:X\to X$ and a morphism $u:f^*\C{F}\to\C{F}$ with $\C{F}\in D_{ctf}^b(X,\La)$, one associates an endomorphism $R\Gm_c(u): R\Gm_c(X,\C{F})\to R\Gm_c(X,\C{F})$ (compare \cite[1.1.7]{Va}).

Moreover, for an $f$-invariant closed subscheme $Z\subseteq X$, we set $U:=X\sm Z$ and form endomorphisms 
$R\Gm_c(u|_Z): R\Gm_c(Z,\C{F}|_Z)\to R\Gm_c(Z,\C{F}|_Z)$ and $R\Gm_c(u|_U): R\Gm_c(U,\C{F}|_U)\to R\Gm_c(U,\C{F}|_U)$ (compare \re{cohcorr}(d)).
\end{Emp}

\begin{Thm} \label{T:dl}
Let $G$ be a linear algebraic group acting on a separated scheme $X$, let $g\in G$ be such that  $X$ has a $g$-equivariant compactification, and
let $s\in\ov{\lan g\ran}$ be a semisimple element.

Then $X^s\subseteq X$ is a closed $g$-invariant subscheme, and for every
morphism  $u:g^*\C{F}\to\C{F}$ with $\C{F}\in D_{ctf}^b(X,\La)$, we have an equality of traces
$\Tr(R\Gm_c(u))=\Tr(R\Gm_c(u|_{X^s}))$ (see \re{not}).
\end{Thm}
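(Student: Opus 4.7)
The plan is to reduce the trace formula to the Lefschetz--Verdier trace formula on a proper scheme and then match the local terms via \rt{main}(b).

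First I would replace the given $g$-equivariant compactification by one on which the entire closed subgroup $\ov{\lan g\ran}\subseteq G$ acts; such a compactification exists by standard equivariant embedding theorems for linear algebraic group actions. Denote it still by $\ov{X}$ and set $\ov{Z}:=\ov{X}^s$. Then $X^s=\ov{Z}\cap X$, the subscheme $\ov{Z}$ is closed in $\ov X$, and it is $g$-invariant because $g$ and $s$ commute inside the abelian group $\ov{\lan g\ran}$. In particular, $X^s\subseteq X$ is closed and $g$-invariant, as claimed in the first sentence of the theorem.

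Let $j:X\hra\ov{X}$ be the open immersion and set $\ov{\C{F}}:=j_!\C{F}$. The morphism $u$ extends canonically to a morphism $\ov{u}:g^*\ov{\C{F}}\to\ov{\C{F}}$. Since $\ov{\C{F}}$ vanishes on $\ov{X}\sm X$ and $\ov{Z}\sm X^s\subseteq\ov{X}\sm X$, we obtain from proper base change the identifications
\[
\Tr\bigl(R\Gm_c(\ov{u})\bigr)=\Tr\bigl(R\Gm_c(u)\bigr),\qquad \Tr\bigl(R\Gm_c(\ov{u}|_{\ov{Z}})\bigr)=\Tr\bigl(R\Gm_c(u|_{X^s})\bigr),
\]
so it suffices to show that the two left-hand sides are equal.

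The hypothesis of \rt{main}(b) is supplied by \rco{dl}(a) applied to the action of $\ov{\lan g\ran}$ on $\ov{X}$ and the subscheme $\ov{Z}=\ov{X}^s$: the condition $(\ov{X}\sm\ov{Z})^s=\emp$ holds tautologically, so $\Gr_g$ has no fixed points in the punctured tubular neighborhood of $\ov{Z}$. Moreover, every $g$-fixed point of $\ov{X}$ is fixed by every element of $\ov{\lan g\ran}$, and in particular by $s$; hence $\ov{X}^g\subseteq\ov{Z}$, so the schemes of fixed points of $\Gr_g$ and of $\Gr_g|_{\ov{Z}}$ coincide, and every connected component of either is proper over $k$ since $\ov X$ is proper. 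Now \rt{main}(b) gives, for every such connected component $\beta$, the equality $LT_\beta(\ov{u})=LT_\beta(\ov{u}|_{\ov{Z}})$, and summing over $\beta$ and invoking Lefschetz--Verdier on the proper schemes $\ov{X}$ and $\ov{Z}$ yields
\[
\Tr\bigl(R\Gm_c(\ov{u})\bigr)=\sum_{\beta}LT_{\beta}(\ov{u})=\sum_{\beta}LT_{\beta}(\ov{u}|_{\ov{Z}})=\Tr\bigl(R\Gm_c(\ov{u}|_{\ov{Z}})\bigr),
\]
which combined with the identifications above finishes the proof. The main technical obstacle is the first step: enlarging or replacing the $g$-equivariant compactification to make $\ov{\lan g\ran}$ act, so that the $s$-fixed subscheme $\ov{X}^s\subseteq\ov{X}$ is defined; once this is arranged the remainder is a clean combination of \rt{main}(b), proper base change, and Lefschetz--Verdier.
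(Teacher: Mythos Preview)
Your argument is correct and takes a genuinely different route from the paper's. The paper first uses additivity of traces, $\Tr(R\Gm_c(u))=\Tr(R\Gm_c(u|_{X^s}))+\Tr(R\Gm_c(u|_{X\sm X^s}))$, to reduce to showing $\Tr(R\Gm_c(u))=0$ under the extra assumption $X^s=\emptyset$. It then passes to a compactification $\ov X$, takes the \emph{boundary} $Z=(\ov X\sm X)_{\red}$ as the closed subscheme, observes that $\ov X^g\subseteq Z$ (since now $X^g\subseteq X^s=\emptyset$), applies \rco{dl}(a) and \rt{main}(b), and concludes that every $LT_\beta(\ov u)=LT_\beta(\ov u|_Z)=0$ because $\ov{\C F}|_Z=0$. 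Thus the paper invokes Lefschetz--Verdier only on $\ov X$ and deduces a vanishing, rather than matching two nonzero sums as you do.

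Your approach skips the additivity reduction, takes $\ov Z=\ov X^s$ directly, and runs Lefschetz--Verdier on both $\ov X$ and $\ov Z$. This is equally valid and more symmetric, at the cost of having to check that $\Fix(\Gr_g)$ and $\Fix(\Gr_g|_{\ov Z})$ agree and to relate $\Tr(R\Gm_c(\ov u|_{\ov Z}))$ back to $\Tr(R\Gm_c(u|_{X^s}))$; the paper's route sidesteps both of these since $\ov{\C F}|_Z=0$ kills the restricted side outright. As for the compactification issue you flag: the paper's proof also needs $s$ to act on $\ov X$ in order to apply \rco{dl}(a) with $Z=(\ov X\sm X)_{\red}$, so the upgrade from a $g$-equivariant to an $\ov{\lan g\ran}$-equivariant compactification is implicitly required there as well --- the paper simply does not comment on it.
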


\begin{proof}
Using the equality
$\Tr(R\Gm_c(u))=\Tr(R\Gm_c(u|_{X^s}))+\Tr(R\Gm_c(u|_{X\sm X^s}))$, it remains to show that $\Tr(R\Gm_c(u|_{X\sm X^s}))=0$. Thus, replacing $X$ by $X\sm X^s$ and $u$ by $u|_{X\sm X^s}$, we may assume that $X^s=\emptyset$, and
we have to show that $\Tr(R\Gm_c(u))=0$.

Choose a $g$-equivariant compactification $\ov{X}$ of $X$, and set $Z:=(\ov{X}\sm X)_{\red}$. Let $j:X\hra\ov{X}$ be the open inclusion, and set $\ov{\C{F}}:=j_!\C{F}\in D_c^b(\ov{X},\qlbar)$. Since $X\subseteq\ov{X}$ is $g$-invariant, our morphism $u$ extends to a morphism $\ov{u}=j_!(u):g^*\ov{\C{F}}\to\ov{\C{F}}$, and we have an equality $\Tr(R\Gm_c(u))=\Tr(R\Gm_c(\ov{u}))$ (compare \cite[1.1.7]{Va}).
Thus, since $\ov{X}$ is proper, the Lefschetz--Verdier trace formula says that
\begin{equation*} \label{Eq:LTF}
\Tr(R\Gm_c(u))=\Tr(R\Gm_c(\ov{u}))=\sum_{\beta\in\pi_0(\ov{X}^g)}LT_{\beta}(\ov{u}),
\end{equation*}
so it suffices to show that each local term $LT_{\beta}(\ov{u})$ vanishes.

Since $X^g\subseteq X^s=\emptyset$, we have $(\ov{X}^g)_{\red}=(Z^g)_{\red}$. Thus
every $\beta$ is a connected component of $Z^g$. In addition, $g$ have no fixed points in the punctured neighborhood of $Z$ (by \rco{dl}(a)).
Therefore, by \rt{main}, we have an equality $LT_{\beta}(\ov{u})=LT_{\beta}(\ov{u}|_Z)$. However, the latter expression vanishes, because $\ov{\C{F}}|_Z=0$, therefore $\ov{u}|_Z=0$. This completes the proof.
\end{proof}

\begin{Cor} \label{C:dl2}
Let $X$ be a scheme over $k$, let $g:X\to X$ be an automorphism of finite order, and let $s$ be a power of $g$ such that $s$ is of order
prime to the characteristic of $k$. Then for every
morphism  $u:g^*\C{F}\to\C{F}$ with $\C{F}\in D_{ctf}^b(X,\La)$, we have an equality of traces
\[
\Tr(R\Gm_c(u))=\Tr(R\Gm_c(u|_{X^s})).
\]
\end{Cor}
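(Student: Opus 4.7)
The plan is to reduce the statement to \rt{dl} by exhibiting a linear algebraic group $G$ acting on $X$ with $g\in G$, verifying that $s$ is a semisimple element of $\ov{\lan g\ran}$, and producing a $g$-equivariant compactification of $X$.

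For the algebraic group, let $m$ be the order of $g$. The cyclic group $\lan g\ran\subseteq\Aut(X)$ is finite, hence the constant group scheme $G:=\lan g\ran$ is a linear algebraic group acting on $X$, and $g\in G(k)$. Since $G$ is finite, its Zariski closure equals itself, so $s\in\lan g\ran=\ov{\lan g\ran}$. By hypothesis $s$ has order $n$ prime to $\operatorname{char}(k)$, so $\lan s\ran\cong\mu_n$ is a diagonalizable subgroup of $G$; this shows $s$ is semisimple, verifying the hypotheses on $g$ and $s$ in \rt{dl}.

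The main point is to construct a $g$-equivariant compactification. One may assume $X$ is separated (this is implicit in wanting $R\Gm_c$ to behave well; it is the only hypothesis from \rt{dl} not automatic here). By Nagata's theorem, $X$ embeds as an open subscheme of some proper $k$-scheme $\ov{Y}$. The translate $\ov{Y}$ need not carry a $g$-action, so I would consider the embedding
\[
\iota:X\hra \ov{Y}^m,\quad x\mapsto (x,g(x),g^2(x),\ldots,g^{m-1}(x)),
\]
and let the group $\lan g\ran$ act on $\ov{Y}^m$ by cyclic shift $(y_0,\ldots,y_{m-1})\mapsto (y_1,\ldots,y_{m-1},y_0)$. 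Then $\iota$ is $g$-equivariant (using $g^m=\Id$), so the scheme-theoretic closure $\ov{X}:=\ov{\iota(X)}\subseteq \ov{Y}^m$ is a proper, $g$-invariant compactification of $X$, as required.

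With all hypotheses of \rt{dl} verified, that theorem yields the equality $\Tr(R\Gm_c(u))=\Tr(R\Gm_c(u|_{X^s}))$, which is the claim. The only nontrivial step is the equivariant compactification above; everything else is a direct translation of hypotheses.
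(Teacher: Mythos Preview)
Your approach differs from the paper's. The paper does not build a global $g$-equivariant compactification of $X$; instead it uses the additivity $\Tr(R\Gm_c(u)) = \Tr(R\Gm_c(u|_U)) + \Tr(R\Gm_c(u|_{X\sm U}))$ for a suitable $g$-invariant dense open $U \subseteq X$, together with Noetherian induction on $X \sm U$, to reduce to the case where $X$ is affine, and only then asserts (without details) the existence of an equivariant compactification. Your twisted-diagonal embedding $x \mapsto (x, gx, \ldots, g^{m-1}x)$ into $\ov{Y}^m$ with the cyclic-shift action is a clean, explicit construction that in fact works for any separated $X$, not just affine ones; it is exactly the kind of argument that fills in the detail the paper leaves implicit.

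The one gap is your treatment of separatedness. Under the paper's conventions the corollary is stated for arbitrary finite-type $k$-schemes, and $R\Gm_c = \pi_{X!}$ is perfectly well-defined without separatedness, so your parenthetical justification does not stand. Nagata's theorem, on which your construction rests, does require $X \to \Spec k$ to be separated. The paper's Noetherian induction is precisely what closes this gap: for any dense open affine $V \subseteq X$, the intersection $U = \bigcap_{i=0}^{m-1} g^i(V)$ is $g$-invariant, dense, open, and contained in the affine $V$, hence separated, so your compactification argument applies to $U$; the complement $X \sm U$ is a proper closed $g$-invariant subscheme, handled by induction. To make your proof complete in the stated generality you should prepend this reduction; otherwise your argument proves the corollary only under the additional hypothesis that $X$ is separated.
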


\begin{proof}
Notice that since $g$ is an automorphism of finite order, $X$ has a $g$-invariant open dense subscheme $U$.
Using additivity of traces $\Tr(R\Gm_c(u))=\Tr(R\Gm_c(u|_{U}))+\Tr(R\Gm_c(u|_{X\sm U}))$, and Noetherian induction on
$X$, we can therefore assume that $X$ is affine. Then $X$ has a $g$-equivariant compactification, so the assertion follows from \rt{dl}. 
\end{proof}

\begin{Emp}
{\bf Example.} Applying \rco{dl2} in the case when $\C{F}=\qlbar$ and $u$ is the identity, we recover the identity
$\Tr(g,R\Gm_c(X,\ql))=\Tr(g,R\Gm_c(X^s,\ql))$, proven in \cite[Thm. 3.2]{DL}.
\end{Emp}

\section{Proof of \rt{main}}

\begin{Emp} \label{E:ncone}
{\bf Deformation to the normal cone} (see \cite[1.4.1 and Lem 1.4.3]{Va}).

Let $R=k[t]_{(t)}$ be the localization of $k[t]$ at $(t)$, set $\C{D}:=\Spec R$,
and let $\eta$ and $s$ be the generic and the special points of $\C{D}$, respectively.

(a) Let $X$ be a scheme over $k$, and $Z\subseteq X$ a closed subscheme. Recall (\cite[1.4.1]{Va}) that to this data one associate a scheme $\wt{X}_{Z}$ over $X_{\C{D}}:=X\times \C{D}$, whose generic fiber (that is, fiber over $\eta\in\C{D}$) is $X_{\eta}:=X\times\eta$, and special fiber is the normal cone $N_Z(X)$.

(b) We have a canonical closed embedding $Z_{\C{D}}\hra \wt{X}_Z$, whose generic fiber is the embedding $Z_{\eta}\hra X_{\eta}$, and special
fiber is $Z\hra N_Z(X)$.

(c)  The assignment $(X,Z)\mapsto \wt{X}_Z$ is functorial, that is, for every morphism $f:(X',Z')\to (X,Z)$ there exists a unique morphism
$\wt{X'}_{Z'}\to\wt{X}_{Z}$ lifting $f_{\C{D}}$ (see \cite[Lem 1.4.3]{Va}). In particular, $f$ gives rise to a canonical morphism $N_{Z'}(X')\to N_Z(X)$ from \re{normalcone}(b).

(d) Let $c:C\to X\times X$ be a correspondence, and let $Z\subseteq X$ be a closed subscheme. Then, by (c), one gets the correspondence
$\wt{c}_Z: \wt{C}_{c^{-1}(Z\times Z)}(C)\to \wt{X}_Z\times \wt{X}_Z$ over $\C{D}$, whose generic fiber is $c_{\eta}$, and special fiber is the correspondence $N_Z(c): N_{c^{-1}(Z\times Z)}(C)\to N_Z(X)\times N_Z(X)$ from \re{corr}(a). 

(e) By (b), we have a canonical closed embedding $\Fix(c|_Z)_{\C{D}}\hra \Fix(\wt{c}_Z)$ over $\C{D}$, whose generic fiber is the embedding $\Fix(c|_Z)_{\eta}\hra \Fix(c)_{\eta}$, and special fiber is $\Fix(c|_Z)\hra \Fix(N_Z(c))$.
\end{Emp}

\begin{Emp} \label{E:specnc}
{\bf Specialization to the normal cone.} Assume that we are in the situation of \re{ncone}.

(a) As in \cite[1.3.2]{Va}, we have a canonical functor $sp_{\wt{X}_Z}:D^b_{ctf}(X,\La)\to D^b_{ctf}(N_Z(X),\La)$. Moreover, for every $\C{F}\in D^b_{ctf}(X,\La)$,
we have a canonical morphism
\[
sp_{\wt{c}_Z}:\Hom_c (\C{F},\C{F})\to  \Hom_{N_Z(c)}(sp_{\wt{X}_Z}(\C{F}), sp_{\wt{X}_Z}(\C{F})).
\]

(b) As in \cite[1.3.3(b)]{Va}, we have a canonical specialization map
\[
sp_{\Fix(\wt{c}_Z)}:H^0(\Fix(c), K_{\Fix(c)})\to H^0(\Fix(N_Z(c)), K_{\Fix(N_Z(s))}),
\]
which is an isomorphism when $\Fix(\wt{c}_Z)\to\C{D}$ is a topologically constant family.

(c) Applying \cite[Prop 1.3.5]{Va}  in this case, we conclude that for every $\C{F}\in D_{ctf}^b(X,\La)$, the following diagram
is commutative
\begin{equation} \label{Eq:spec}
\CD
      \Hom_c (\C{F},\C{F}) @>\C{Tr}_c>> H^0(\Fix(c), K_{\Fix(c)})\\
      @V{sp_{\wt{c}_Z}}VV            @V{sp_{\Fix(\wt{c}_Z)}}VV\\
      \Hom_{N_Z(c)}(sp_{\wt{X}_Z}(\C{F}), sp_{\wt{X}_Z}(\C{F}))
      @>\C{Tr}_{N_Z(c)}>>  H^0(\Fix(N_Z(c)), K_{\Fix(N_Z(s))}).
\endCD
\end{equation}
%

\end{Emp}

Now we are ready to prove \rt{main}, mostly repeating the argument of \cite[Thm 2.1.3(b)]{Va}.

\begin{Emp} \label{E:pfmaina}
{\bf Proof of \rt{main}(a).}

{\bf Step 1.} We may assume that $\Fix(c)_{\red}=\Fix(c|_Z)_{\red}$.

\begin{proof}
By \rl{good}, there exists an open subscheme $W\subseteq C$ such that $W\cap\Fix(c)_{\red}=\Fix(c|_Z)_{\red}$. Replacing
$c$ by $c|_W$ and $u$ by $u|_W$, we can assume that $\Fix(c)_{\red}=\Fix(c|_Z)_{\red}$.
\end{proof}

{\bf Step 2.} We may assume that $\C{F}|_Z\simeq 0$, and it suffices to show that in this case $\C{Tr}_c(u)=0$.

\begin{proof}
Set $U:=X\sm Z$, and let $i:Z\hra X$ and $j:U\hra X$ be the embeddings. Since $Z$ is $c$-invariant, one can associate to $u$
two $c$-morphisms $[i_Z]_!(u|_Z)\in\Hom_c(i_!(\C{F}|_Z),i_!(\C{F}|_Z))$ and $[j_U]_!(u|_U)\in\Hom_c(j_!(\C{F}|_U),j_!(\C{F}|_U))$ (see \cite[1.5.9]{Va}). Then, by the additivity of the trace map \cite[Prop. 1.5.10]{Va}, we conclude that
 \[
 \C{Tr}_c(u)=\C{Tr}_c([i_Z]_!(u|_Z))+\C{Tr}_c([j_U]_!(u|_U)).
 \]

Moreover, using the assumption $\Fix(c|_Z)_{\red}=\Fix(c)_{\red}$ and the commutativity of the trace map with closed embeddings \cite[Prop 1.2.5]{Va}, we conclude that
\[
\C{Tr}_c([i_Z]_!(u|_Z))=\C{Tr}_{c|_Z}(u|_Z).
\] Thus it remains to show that $\C{Tr}_c([j_U]_!(u|_U))=0$.
For this we can replace $\C{F}$ by $j_!(\C{F}|_U)$ and $u$ by $[j_U]_!(u|_U)$. In this case, $\C{F}|_Z\simeq 0$, and it remains to show that
$\C{Tr}_c(u)=0$ as claimed.
\end{proof}

{\bf Step 3. Specialization to the normal cone.} By the commutative diagram \form{spec}, we have an equality
\[
\C{Tr}_{N_Z(c)}(sp_{\wt{c}_Z}(u))=sp_{Fix(\wt{c}_Z)}(\C{Tr}_c(u)).
\]
Thus to show the vanishing of $\C{Tr}_c(u)$, it suffices to show that

(i) the map $sp_{\Fix(\wt{c}_Z)}$ is an isomorphism, and

(ii) we have $\C{Tr}_{N_Z(c)}(sp_{\wt{c}_Z}(u))=0$.

\vskip 8truept

{\bf Step 4. Proof of Step 3(i).} By observation \re{specnc}(b), it suffices to show that the closed embedding $\Fix(c|_Z)_{\C{D},\red}\hra \Fix(\wt{c}_Z)_{\red}$ (see \re{ncone}(b)) is an isomorphism.
Moreover, we can check separately the corresponding assertions for generic and special fibers.

For generic fibers, the assertions follows from our assumption
$\Fix(c)_{\red}=\Fix(c|_Z)_{\red}$ (see Step 1), while the assertion for special fibers  $\Fix(c|_Z)_{\red}=\Fix(N_Z(c))_{\red}$
follows from our assumption that $c$ has no
fixed points in the punctured tubular neighborhood of $Z$.

\vskip 8truept

{\bf Step 5. Proof of Step 3(ii).} By a standard reduction, one can assume that $\La$ is finite. We are going to deduce the assertion from  \rp{cones} applied to the correspondence $N_Z(c)$ and a weakly $\B{G}_m$-equivariant $sp_{\wt{X}_Z}(\C{F})\in D_{ctf}(N_Z(X),\La)$.

Note that the zero section $Z\subseteq N_Z(X)$ is closed (by \re{normalcone}(a)). Next, since $Z$ is $c$-invariant, we have $c^{-1}(Z\times Z)=c_r^{-1}(Z)$. Therefore if follows by \re{normalcone}(c) that $Z\subseteq N_Z(X)$ is $N_Z(c)$-invariant, and
the correspondence $N_Z(c)_t|_Z$ is identified with $Z_{N_Z(c)}=c|_Z$. Since $c$ has no almost fixed points in the punctured tubular neighborhood of $Z$, we therefore conclude that $N_Z(c)$ satisfies the assumptions of  \rp{cones}. So it remains to show that $sp_{\wt{X}_Z}(\C{F})|_Z\simeq 0$ and that $sp_{\wt{X}_Z}(\C{F})$ is weakly $\B{G}_m$-equivariant with respect to the $n$-twisted action for some $n$.

Both assertions follow from results of Verdier \cite{Ve}. Namely, the vanishing assertion follows from isomorphism $sp_{\wt{X}_Z}(\C{F})|_Z\simeq\C{F}|_Z$  (see \cite[$\S$8, (SP5)]{Ve} or \cite[Prop. 1.4.2]{Va}) and our assumption $\C{F}|_Z\simeq 0$ (see Step 2). The equivariance assertion follows from the fact that $sp_{\wt{X}_Z}(\C{F})$ is monodromic (see \cite[$\S$8, (SP1)]{Ve}), because  $\La$ is finite
(use \cite[Prop 5.1]{Ve}).
\end{Emp}

\begin{Emp}
{\bf Proof of \rt{main}(b).}
The first assertion follows from \rl{good}. To show the second one,  choose an open subscheme $W\subseteq C$ such that $W\cap\Fix(c)_{\red}=\beta_{\red}$. Replacing $c$ by $c|_W$, we can assume that $\beta_{\red}=\Fix(c)_{\red}=\Fix(c|_Z)_{\red}$, 
thus $\Fix(c|_Z)$ is proper over $k$. 

As it was already observed in Step 5 of \re{pfmaina}, the correspondence $N_Z(c)|_Z$ is identified with $c|_Z$.
Thus $\Fix(N_Z(c)|_Z)=\Fix(c|_Z)$ is proper over $k$. It now follows from \rl{verygood} that the finiteness condition in \rd{good}(b) is satisfied automatically, therefore $c$ has no almost fixed points in the tubular neighborhood of $Z$ (see \re{remgood}(c)). Now the equality $LT_{\beta}(u)=LT_{\beta}(u|_Z)$ follows from obvious equalities $\C{Tr}_{\beta}(u)=\C{Tr}_{\Fix(c|_Z)}(u),
\C{Tr}_{\beta}(u|_Z)=\Tr_{c|_Z}(u|_Z)$ and part (a).
\end{Emp}

\section{Proof of \rp{const}} \label{S:rpconst}

We are going to reduce the result from the assertion that trace maps commute with nearby cycles.

\begin{Emp} \label{E:setup}
{\bf Set up.} Let $\C{D}$ be a spectrum of a discrete valuation ring over $k$ with residue field $k$, and $f:X\to\C{D}$ a morphism of schemes of finite type.

(a) Let $\eta$, $\ov{\eta}$ and $s$ be the generic, the geometrically generic and the special point of $\C{D}$, respectively.
We denote by $X_{\eta},X_{\ov{\eta}}$ and $X_s$ the generic, the geometric generic and the special fiber of  $X$, respectively,
and let $i_{{\eta}}:X_{{\eta}}\to X$, $i_{\ov{\eta}}:X_{\ov{\eta}}\to X$, $i_s:X_s\to X$  and $\pi_{\eta}:X_{\ov{\eta}}\to X_{\eta}$ be the canonical morphisms.

(b) For every $\C{F}\in D(X,\La)$, we set $\C{F}_{\eta}:=i^*_{\eta}(\C{F}), \C{F}_{\ov{\eta}}:=i^*_{\ov{\eta}}(\C{F})$ and  $\C{F}_{s}:=i^*_{s}(\C{F})$. For every  $\C{F}_{\eta}\in  D(X_{\eta},\La)$, we set  $\C{F}_{\ov{\eta}}:=\pi_{\eta}^*(\C{F}_{\eta})$.

(c) Let $\Psi=\Psi_X:D_{ctf}^b(X_{\eta},\La)\to D_{ctf}^b(X_{s},\La)$ be the nearby cycle functor. By definition, it is defined by the formula
$\Psi_X(\C{F}_{\eta}):=i^*_s i_{\ov{\eta}*}(\C{F}_{\ov{\eta}})$.

(d) Consider functor $\ov{\Psi}_X:=i^*_s\circ i_{\ov{\eta}*}:D(X_{\ov{\eta}},\La)\to D(X_{s},\La)$.  Then we have
 $\Psi_X(\C{F}_{\eta})=\ov{\Psi}_X(\C{F}_{\ov{\eta}})$ for all $\C{F}_{\eta}\in D_{ctf}^b(X_{\eta},\La)$.
\end{Emp}

\begin{Emp} \label{E:ULA}
{\bf ULA sheaves.} Assume that we are in the situation of \re{setup}.

(a) We have a canonical isomorphism $\Psi_{{X}}\circ i^*_{\eta}\simeq i_s^*\circ i_{\ov{\eta}*}\circ i^*_{\ov{\eta}}$ of functors $D_{ctf}^b(X,\La)\to D_{ctf}^b(X_s,\La)$.
In particular, the unit map $\Id\to i_{\ov{\eta}*}\circ i^*_{\ov{\eta}}$ induces a morphism  of functors 
$i_s^*\to \Psi_{{X}}\circ i^*_{\eta}=\ov{\Psi}_{{X}}\circ i^*_{\ov{\eta}}$.

(b) Note that if $\C{F}\in  D_{ctf}^b(X,\La)$ is ULA over $\C{D}$, then the induced morphism
\[
\C{F}_s=i_s^*(\C{F})\to \Psi_{{X}}i^*_{\eta}(\C{F})=\Psi_X(\C{F}_{\eta})=\ov{\Psi}_X(\C{F}_{\ov{\eta}})
\]
 is an isomorphism. In particular, we
have a canonical isomorphism $\La_{s}\simeq \ov{\Psi}_{\C{D}}(\La_{\ov{\eta}})$.
\end{Emp}

\begin{Emp} \label{E:nearbyspec}
{\bf Construction.} Assume that we are in the situation of \re{setup}.

(a) For every $\C{F}_{\ov{\eta}}\in D(X_{\ov{\eta}},\La)$, consider composition
\[
R\Gm(X_{\ov{\eta}}, \C{F}_{\ov{\eta}})\simeq R\Gm(X, i_{\ov{\eta}*}(\C{F}_{\ov{\eta}}))\overset{i_s^*}{\lra}  R\Gm(X_s, i_s^*i_{\ov{\eta}*}(\C{F}_{\ov{\eta}}))=R\Gm(X_s, \ov{\Psi}_X(\C{F}_{\ov{\eta}})).
\]

(b) Consider canonical morphism $\ov{\Psi}_X(K_{X_{\ov{\eta}}})\to K_{X_s}$, defined as a composition
\[
\ov{\Psi}_X(K_{X_{\ov{\eta}}})=\ov{\Psi}_X(f_{\ov{\eta}}^!(\La_{\ov{\eta}}))\overset{BC}{\lra}f_{s}^!(\ov{\Psi}_{\C{D}}(\La_{\ov{\eta}}))\simeq f_{s}^!(\La_s)=K_{X_s}.
\]

(c) Denote by $\ov{\Sp}_X$ the composition
\[
R\Gm(X_{\ov{\eta}}, K_{X_{\ov{\eta}}})\overset{(a)}{\lra} R\Gm(X_s, \ov{\Psi}_X(K_{X_{\ov{\eta}}}))\overset{(b)}{\lra} R\Gm(X_s,K_{X_s}).
\]

(d) Using the observation $K_{X_{\ov{\eta}}}\simeq \pi_{\eta}^*(K_{X_{{\eta}}})$, we denote by $\Sp_X$ the composition
\[
R\Gm(X_{{\eta}}, K_{X_{\ov{\eta}}})\overset{\pi_{\eta}^*}{\lra} R\Gm(X_{\ov{\eta}}, K_{X_{\ov{\eta}}})\overset{\ov{\Sp}_X}{\lra} R\Gm(X_s, K_{X_s}).
\]
\end{Emp}

\begin{Lem} \label{L:toptriv}
Assume that $f:X\to\C{D}$ is a topologically constant family (see \re{family}). Then the specialization map
 $\ov{\Sp}_X:R\Gm(X_{\ov{\eta}}, K_{X_{\ov{\eta}}})\to R\Gm(X_s, K_{X_s})$ of \re{nearbyspec}(c) coincides with the canonical identification of \rcl{family}.
\end{Lem}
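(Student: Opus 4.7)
The plan is to reduce to the trivial family $X=Y\times\C{D}$ and then verify the equality by direct computation. Since étale cohomology and all the constructions involved---the six operations, nearby cycles, and the specialization of \rcl{family}---are insensitive to nilpotents, I would first replace $X$ by $X_{\red}$. Because $\C{D}$ is the spectrum of a DVR and is therefore already reduced, the topological-triviality hypothesis yields a $\C{D}$-isomorphism $X_{\red}\cong Y\times\C{D}$; so I may assume $X=Y\times\C{D}$ and $f=\pr_{\C{D}}$. In this case a standard base-change computation for $f^!$ gives a canonical isomorphism $K_{X/\C{D}}\simeq K_Y\boxtimes\La_{\C{D}}$, specializing to $K_{X_?}\simeq K_Y\boxtimes\La_?$ for $?\in\{s,\ov{\eta}\}$, and Künneth for global sections identifies $R\Gm(X_?,K_{X_?})$ with $R\Gm(Y,K_Y)$.

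Under these identifications, $\C{F}:=f_*(K_{X/\C{D}})$ becomes the constant sheaf on $\C{D}$ with fiber $R\Gm(Y,K_Y)$, so the specialization map $\C{F}_s\to\C{F}_{\ov{\eta}}$ used in the proof of \rcl{family} is literally the identity on $R\Gm(Y,K_Y)$. Consequently, so is the canonical identification $R\Gm(X_{\ov{\eta}},K_{X_{\ov{\eta}}})\simeq R\Gm(X_s,K_{X_s})$ furnished by that claim.

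Next I show that $\ov{\Sp}_X$ is likewise the identity on $R\Gm(Y,K_Y)$. The Künneth formula for pushforward along the product morphism $\Id_Y\times j:Y\times\ov{\eta}\to Y\times\C{D}$ yields $\ov{\Psi}_X(K_Y\boxtimes\La_{\ov{\eta}})\simeq K_Y\boxtimes\ov{\Psi}_{\C{D}}(\La_{\ov{\eta}})$, and combined with the canonical isomorphism $\ov{\Psi}_{\C{D}}(\La_{\ov{\eta}})\simeq\La_s$ from \re{ULA}(b), the base-change morphism in \re{nearbyspec}(b) becomes the identity on $K_Y\boxtimes\La_s$. Similarly, the map in \re{nearbyspec}(a) factors, after Künneth, as the identity on $R\Gm(Y,K_Y)$ tensored with the specialization map $R\Gm(\ov{\eta},\La_{\ov{\eta}})\to R\Gm(s,\ov{\Psi}_{\C{D}}(\La_{\ov{\eta}}))$, which collapses to the identity on $\La$ via \re{ULA}(b). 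Composing the two, $\ov{\Sp}_X$ is the identity on $R\Gm(Y,K_Y)$, matching the identification of \rcl{family}.

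The principal difficulty is compatibility bookkeeping: verifying that the Künneth identification $K_{X/\C{D}}\simeq K_Y\boxtimes\La_{\C{D}}$ is intertwined both by the adjunction unit $\Id\to i_{\ov{\eta}*}i_{\ov{\eta}}^*$ (which drives $\ov{\Sp}_X$) and by the local constancy of $\C{F}$ on $\C{D}$ (which drives \rcl{family}). Each such compatibility is a standard instance of the functoriality of the six operations, so the argument is routine once it is set up.
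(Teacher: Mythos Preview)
Your approach is correct but differs from the paper's. You reduce to the product $X=Y\times\C{D}$ and use K\"unneth to identify both $\ov{\Sp}_X$ and the map of \rcl{family} with the identity on $R\Gm(Y,K_Y)$. The paper never makes this reduction: it works intrinsically with $K_{X/\C{D}}:=f^!(\La_{\C{D}})$ and $\C{F}:=f_*(K_{X/\C{D}})$ and constructs a three-row commutative diagram linking $\ov{\Sp}_X$ (bottom row) to the map $\C{F}_{\ov{\eta}}\to\C{F}_s$ on stalks (top row) via the base-change isomorphisms $(K_{X/\C{D}})_?\simeq K_{X_?}$ and $\C{F}_?\simeq R\Gm(X_?,(K_{X/\C{D}})_?)$; it then checks that the top row is the inverse of the specialization map defining \rcl{family}.

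Your reduction buys concreteness, but the ``compatibility bookkeeping'' you defer at the end is precisely where the content sits: verifying that the K\"unneth identification intertwines the base-change morphism of \re{nearbyspec}(b) with the ULA identification of \re{ULA}(b) amounts to the commutativity of the bottom-right inner square in the paper's diagram, which the paper checks via an auxiliary diagram in $i_s^*,i_{\ov{\eta}*},f^!$. So the two proofs carry essentially the same verification load, packaged differently; the paper's version has the minor advantage of not needing the passage to $X_{\red}$ and applying uniformly without invoking a K\"unneth formula for $i_{\ov{\eta}*}$.
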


\begin{proof}
Though the assertion follows by straightforward unwinding the definitions, we sketch the argument for the convenience of the reader.

As in the proof of \rcl{family}, we set $K_{X/\C{D}}:=f^!(\La_{\C{D}})$ and $\C{F}:=f_*(K_{X/\C{D}})$. Consider diagram
\begin{equation} \label{Eq:trivfam}
\begin{CD}
\C{F}_{\ov{\eta}}=R\Gm(\ov{\eta},\C{F}_{\ov{\eta}}) @>\re{nearbyspec}(a)>> R\Gm(s, \ov{\Psi}_{\C{D}}(\C{F}_{\ov{\eta}})) @<\re{ULA}(b)<< R\Gm(s,\C{F}_s)=\C{F}_s\\
@VBC_*VV @VBC_*VV @VBC_*VV\\
R\Gm(X_{\ov{\eta}}, (K_{X/\C{D}})_{\ov{\eta}}) @>\re{nearbyspec}(a)>> R\Gm(X_s, \ov{\Psi}_X((K_{X/\C{D}})_{\ov{\eta}})) @<\re{ULA}(b)<< R\Gm(X_s,(K_{X/\C{D}})_s)\\
@VBC^*VV @VBC^*VV @VBC^*VV\\
R\Gm(X_{\ov{\eta}}, K_{X_{\ov{\eta}}}) @>\re{nearbyspec}(a)>> R\Gm(X_s, \ov{\Psi}_X(K_{X_{\ov{\eta}}})) @>\re{nearbyspec}(b)>> R\Gm(X_s,K_{X_s}),
\end{CD}
\end{equation}
where

$\bullet$ maps denoted by $BC_*$ are induced by the (base change) isomorphisms $\C{F}_{\ov{\eta}}\isom f_{\ov{\eta}*}((K_{X/\C{D}})_{\ov{\eta}})$,
$\C{F}_{s}\isom f_{s*}((K_{X/\C{D}})_s)$ and base change morphisms, while

$\bullet$ maps denoted by $BC^*$ are induced by the (base change) isomorphisms $(K_{X/\C{D}})_{\ov{\eta}}\isom K_{X_{\ov{\eta}}}$ and $(K_{X/\C{D}})_{s}\isom K_{X_s}$.

We claim that the diagram \form{trivfam} is commutative. Since top left, top right and bottom left inner squares are commutative by functoriality, it remain to show the commutativity of the right bottom inner square. In other words, it suffices to show the commutativity of the following diagram
\[
\begin{CD}
\ov{\Psi}_X((K_{X/\C{D}})_{\ov{\eta}}) @<\re{ULA}(b)<< (K_{X/\C{D}})_s\\
@VBC^*VV @VBC^*VV\\
\ov{\Psi}_X(K_{X_{\ov{\eta}}}) @>\re{nearbyspec}(b)>> K_{X_s}.
\end{CD}
\]
Using identity $K_{X/\C{D}}=f^!(\La_{\C{D}})$, it suffices to show the commutativity of the diagram
\[
\begin{CD}
i_s^*i_{\ov{\eta}*}i^*_{\ov{\eta}}f^! @<unit << i_s^*f^! @>BC>>f_s^!i_s^*\\
@VBCVV @VunitVV    @VVunitV\\
i_s^*i_{\ov{\eta}*}f^! i^*_{\ov{\eta}} @>BC>> i_s^*f^!i_{\ov{\eta}*}i^*_{\ov{\eta}} @>BC>> f_s^!i_s^*i_{\ov{\eta}*}i^*_{\ov{\eta}},
\end{CD}
\]
which is standard.

By the commutativity of \form{trivfam}, it remains to show that the top arrow
\[
\C{F}_{\ov{\eta}}=R\Gm(\ov{\eta},\C{F}_{\ov{\eta}})\to R\Gm(s, \ov{\Psi}_{\C{D}}(\C{F}_{\ov{\eta}}))\simeq R\Gm(s,\C{F}_s)=\C{F}_s
\]
of \form{trivfam} equals the inverse of the specialization map
\[
\C{F}_s=R\Gm(s,\C{F}_s)\simeq R\Gm(\C{D},\C{F})\overset{i_{\ov{\eta}}^*}{\lra}R\Gm(\ov{\eta},\C{F}_{\ov{\eta}})=\C{F}_{\ov{\eta}}.
\]

But this follows from the commutativity of the following diagram
\[
\begin{CD}
R\Gm(\ov{\eta},\C{F}_{\ov{\eta}})@<i_{\ov{\eta}}^*<< R\Gm(\C{D},\C{F}) @>i_s^*>> R\Gm(s,\C{F}_s) \\
@| @Vunit VV @VunitVV \\
R\Gm(\ov{\eta},\C{F}_{\ov{\eta}})@= R\Gm(\C{D}, i_{\ov{\eta}*}(\C{F}_{\ov{\eta}})) @>i_s^*>> R\Gm(s,i_s^* i_{\ov{\eta}*}(\C{F}_{\ov{\eta}})).
\end{CD}
\]

\end{proof}

\begin{Emp} \label{E:nearby}
{\bf Specialization of cohomological correspondences.} Let ${c}:{C}\to {X}\times {X}$ be a
correspondence over $\C{D}$, let ${c}_{\eta}:{C}_{\eta}\to {X}_{\eta}\times {X}_{\eta}$, ${c}_{\ov{\eta}}:{C}_{\ov{\eta}}\to {X}_{\ov{\eta}}\times {X}_{\ov{\eta}}$ and
${c}_{s}:{C}_{s}\to {X}_{s}\times {X}_{s}$ be the generic, the geometric generic and the special fibers of ${c}$, respectively. Fix $\C{F}_{\eta}\in D_{ctf}^b({X}_{\eta},\La)$.

(a) Using the fact that the projection $\pi_{\eta}:\ov{\eta}\to\eta$ is pro-\'etale, we have the following commutative diagram
\begin{equation*} \label{Eq:extscal}
\CD
      \Hom_{{c}_{\eta}}(\C{F}_{\eta},\C{F}_{\eta})
  @>\C{Tr}_{{c}_{\eta}}>> H^0(\Fix({c}_{\eta}), K_{\Fix({c}_{\eta})})\\
      @V\pi_{\eta}^*VV            @V\pi_{\eta}^*VV\\
       \Hom_{{c}_{\ov{\eta}}}(\C{F}_{\ov{\eta}},\C{F}_{\ov{\eta}})
  @>\C{Tr}_{{c}_{\ov{\eta}}}>> H^0(\Fix({c}_{\ov{\eta}}), K_{\Fix({c}_{\ov{\eta}})}).
\endCD
\end{equation*}

(b) Consider the map
\[
\Psi_{{c}}:\Hom_{{c}_{\eta}}(\C{F}_{\eta},\C{F}_{\eta})\to \Hom_{{c}_s}(\Psi_{{X}}(\C{F}_{\eta}),\Psi_{{X}}(\C{F}_{\eta})),
\]
which sends $u_{\eta}:{c}_{\eta l}^*(\C{F}_{\eta})\to {c}_{\eta r}^!(\C{F}_{\eta})$ to the composition
\[
{c}_{sl}^*(\Psi_{{X}}(\C{F}_{\eta}))\overset{BC}{\lra}\Psi_{C}({c}_{\eta l}^*(\C{F}_{\eta}))\overset{\Psi_{{C}}(u_{\eta})}{\lra} \Psi_{{C}}({c}_{\eta r}^!(\C{F}_{\eta}))\overset{BC}{\lra}
{c}_{sr}^!(\Psi_{{X}}(\C{F}_{\eta})).
\]
\end{Emp}

\begin{Prop} \label{P:nearby}
In the situation \re{nearby}, the following diagram  is commutative
\begin{equation*} \label{Eq:nearby}
\CD
      \Hom_{{c}_{\eta}}(\C{F}_{\eta},\C{F}_{\eta})
  @>\C{Tr}_{{c}_{\eta}}>> H^0(Fix({c}_{\eta}), K_{\Fix({c}_{\eta})})\\
      @V\Psi_{{c}}VV            @V{\Sp_{\Fix({c})}}VV\\
     \Hom_{{c}_s}(\Psi_{{X}}(\C{F}_{\eta}),\Psi_{{X}}(\C{F}_{\eta}))
      @>\C{Tr}_{{c}_s}>>  H^0(Fix({c}_s), K_{Fix({c}_s)}).
\endCD
\end{equation*}
\end{Prop}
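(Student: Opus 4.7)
The strategy is to mimic the proof of \cite[Prop 1.3.5]{Va}, which establishes the analogous compatibility between trace maps and specialization to the normal cone. Both situations rest on the same underlying principle: the trace map $\C{Tr}_c$ is built from a sequence of standard six-functor operations (pullback, proper pushforward, Künneth, base change along the diagonal, and the evaluation pairing $\C{F} \otimes D\C{F} \to K_X$), and each of these operations admits a compatible base change morphism with the nearby cycles functor $\Psi$.

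First, I would unwind the definition of $\C{Tr}_c$. A cohomological correspondence $u \in \Hom(c_l^* \C{F}, c_r^! \C{F})$ yields, via duality and Künneth, a canonical section of $K_{\Fix(c)}$ obtained by applying the diagonal base change to the evaluation $c_l^*\C{F} \otimes c_r^* D\C{F} \to c_r^! \C{F} \otimes c_r^* D\C{F} \to c_r^! K_X = K_C$. The large diagram in the proposition then decomposes into a sequence of smaller squares, one for each intermediate step in this construction.

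Next, I would record the compatibilities of $\Psi$ with the six operations needed. Writing $\Psi = i_s^* \circ i_{\ov{\eta}*}$ (applied after $\pi_\eta^*$), proper base change gives $\Psi \circ f^* \simeq f^* \circ \Psi$ and $\Psi \circ f_! \simeq f_! \circ \Psi$; there are canonical (not necessarily invertible) base change morphisms $\Psi \circ f^! \to f^! \circ \Psi$ and $\Psi \circ f_* \to f_* \circ \Psi$; there is a canonical Künneth-type comparison for $\Psi$ on external tensor products; and the morphism $\Psi(K_{X_{\ov{\eta}}}) \to K_{X_s}$ of \re{nearbyspec}(b) identifies the dualizing complexes. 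Each of the smaller squares in the decomposition then commutes either by one of these compatibilities or by the naturality of base change, exactly as in \cite[Prop 1.3.5]{Va}.

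The main obstacle is bookkeeping. Several of the base change morphisms used inside the trace construction (for $c_r^!$, for the Künneth pairing, and for the diagonal square defining $\Fix(c)$) go in formally opposite directions to the corresponding base change morphisms for $\Psi$, and one has to verify that in the specific combinations arising from the Lefschetz--Verdier construction, these fit together coherently and reproduce the composition $\Sp_{\Fix(c)} \circ \C{Tr}_{c_\eta}$ defined in \re{nearbyspec}. The most delicate point is the interaction between $\Psi$ and $c_r^!$ applied to $\C{F}_\eta$: the map $\Psi(c_{\eta r}^! \C{F}_\eta) \to c_{sr}^!(\Psi \C{F}_\eta)$ built into the definition of $\Psi_c$ in \re{nearby}(b) must be reconciled, after pullback along the diagonal of $X_s$, with the composition $\Psi(K_{\Fix(c_\eta)}) \to K_{\Fix(c_s)}$ implicit in $\Sp_{\Fix(c)}$. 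Granting this identification, which is a standard but careful diagram chase with the six functors, the commutativity of the large diagram follows.
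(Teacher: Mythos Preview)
Your proposal is correct and follows exactly the paper's primary approach: the paper states that the assertion is a small modification of \cite[Prop 1.3.5]{Va} and that the same argument applies, which is precisely the six-functor diagram chase you outline. The paper also records an alternative route via the general criterion of \cite[Section 4]{Va}, namely verifying (as in \cite[4.1.4(b)]{Va}) that the nearby cycle functors $\Psi_{\cdot}$ with their base change morphisms form a compactifiable cohomological morphism in the sense of \cite[4.1.3]{Va}, and then invoking \cite[Cor 4.3.2]{Va}.
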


\begin{proof}
The assertion is a small modification \cite[Prop 1.3.5]{Va}, and the argument of \cite[Prop 1.3.5]{Va} proves our assertion as well. Alternatively, the assertion can be deduced from the general criterion of \cite[Section 4]{Va}. Namely, repeating the argument of \cite[4.1.4(b)]{Va} word-by-word, one shows that the nearby cycle functors $\Psi_{\cdot}$ together with base change morphisms define a compactifiable cohomological morphism in the sense of \cite[4.1.3]{Va}. Therefore the assertion follows from (a small modification of) \cite[Cor 4.3.2]{Va}.
\end{proof}

\begin{Lem} \label{L:nearby}
Let ${c}:{C}\to {X}\times {X}$ be a correspondence over $\C{D}$. Then for every $\C{F}\in  D_{ctf}^b({X},\La)$ and $u\in\Hom_{{c}}(\C{F},\C{F})$,  the following diagram is commutative
\begin{equation*} 
\CD
         c_{sl}^*(\C{F}_s) @>u_s>> c_{sr}^!(\C{F}_s)\\
      @V\re{ULA}(a)VV            @VV\re{ULA}(a)V\\
     c_{sl}^*(\Psi_{{X}}(\C{F}_{\eta})) @>\Psi_{C}(u_{\eta})>> c_{sr}^!(\Psi_{{X}}(\C{F}_{\eta})).
\endCD
\end{equation*}

\end{Lem}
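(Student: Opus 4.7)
The plan is to show commutativity by unwinding the definitions of both horizontal arrows and reducing the square to a single naturality diagram. By \re{cohcorr}(b), the top arrow $u_s$ is, by definition, the composition
\[
c_{sl}^*\C{F}_s\simeq i_{C_s}^*(c_l^*\C{F})\overset{i_{C_s}^*(u)}{\lra} i_{C_s}^*(c_r^!\C{F})\overset{BC}{\lra} c_{sr}^!\C{F}_s,
\]
where $i_{C_s}:C_s\hra C$ denotes the closed embedding, while by \re{nearby}(b), the bottom arrow is the composition
\[
c_{sl}^*(\Psi_X(\C{F}_\eta))\overset{BC}{\lra} \Psi_C(c_{\eta l}^*\C{F}_\eta)\overset{\Psi_C(u_\eta)}{\lra} \Psi_C(c_{\eta r}^!\C{F}_\eta)\overset{BC}{\lra} c_{sr}^!(\Psi_X(\C{F}_\eta)).
\]
The two vertical arrows are $c_{sl}^*$, respectively $c_{sr}^!$, applied to the morphism $\C{F}_s\to \Psi_X(\C{F}_\eta)$ of \re{ULA}(a), which is itself the image under $i_s^*$ of the unit $\C{F}\to i_{\ov{\eta}*}i_{\ov{\eta}}^*\C{F}$.

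Next, I would apply the natural transformation $i_{C_s}^*\Rightarrow \Psi_C\circ i_{C_\eta}^*$ of \re{ULA}(a) (now for the scheme $C$) to the morphism $u:c_l^*\C{F}\to c_r^!\C{F}$ in $D^b_{ctf}(C,\La)$. Naturality yields the commutative square
\[
\CD
i_{C_s}^*(c_l^*\C{F}) @>i_{C_s}^*(u)>> i_{C_s}^*(c_r^!\C{F})\\
@VVV @VVV \\
\Psi_C(c_{\eta l}^*\C{F}_\eta) @>\Psi_C(u_\eta)>> \Psi_C(c_{\eta r}^!\C{F}_\eta).
\endCD
\]
The target square of the lemma then follows once I identify each of the four corners with the corresponding object of the lemma, via the base change isomorphism $i_{C_s}^*\circ c_l^*\simeq c_{sl}^*\circ i_s^*$ and its analogue for the $!$-pullback, together with the base changes $\Psi_C\circ c_{\eta l}^*\to c_{sl}^*\circ \Psi_X$ and $\Psi_C\circ c_{\eta r}^!\to c_{sr}^!\circ \Psi_X$. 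The verification amounts to the commutativity of two auxiliary hexagons: on the left, comparing the two ways of passing from $c_{sl}^*\C{F}_s$ to $\Psi_C(c_{\eta l}^*\C{F}_\eta)$ (via $i_{C_s}^*\circ c_l^*$ or via $c_{sl}^*\circ \Psi_X$), and analogously on the right for $c_r^!$. Both are standard consequences of the compatibility of base change morphisms with the unit $\Id\to i_{\ov{\eta}*}i_{\ov{\eta}}^*$ recalled in \cite[XVII-XVIII]{SGA4}.

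The main obstacle is purely organizational: one must juggle half a dozen base change morphisms simultaneously (for $*$-pullback and $!$-pullback, each composed with $i_s^*$, $i_{\ov{\eta}*}$, $i_{\ov{\eta}}^*$ and $\Psi$) and verify their coherence. A more conceptual route, parallel to the alternative proof of \rp{nearby}, is to observe that $\Psi$ equipped with these base change morphisms defines a compactifiable cohomological morphism in the sense of \cite[4.1.3]{Va}; the present lemma then becomes a direct instance of the naturality of such morphisms on cohomological correspondences, cf.\ \cite[Cor 4.3.2]{Va}.
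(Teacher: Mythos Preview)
Your proposal is correct and follows essentially the same approach as the paper: both expand the square into three inner squares (left and right coming from base change compatibilities for $c_l^*$ and $c_r^!$, the middle from naturality of the transformation $i_s^*\Rightarrow\Psi\circ i_{\eta}^*$ applied to $u$), and both dismiss the outer squares as standard. The paper's write-up is slightly more explicit in displaying the intermediate objects $(c_l^*\C{F})_s$ and $(c_r^!\C{F})_s$ and in working with $\ov{\Psi}$ rather than $\Psi$, but the argument is the same.
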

\begin{proof}
The assertion is a rather straightforward diagram chase. Indeed, it suffices to show the commutativity of the following diagram:
\begin{equation} \label{Eq:nearby2}
\CD
         c_{sl}^*(\C{F}_s) @= (c_{l}^*\C{F})_s @>u>> (c_{r}^!\C{F})_s @>BC>>     c_{sr}^!(\C{F}_s)\\
      @V\re{ULA}(a)VV            @V\re{ULA}(a)VV  @V\re{ULA}(a)VV            @VV\re{ULA}(a)V\\
     c_{sl}^*(\ov{\Psi}_{{X}}(\C{F}_{\ov{\eta}})) @>BC>> \ov{\Psi}_{C}((c_{l}^*\C{F})_{\ov{\eta}}) @>u>> \ov{\Psi}_{C}((c_{r}^!\C{F})_{\ov{\eta}}) @>BC>>   c_{sr}^!(\ov{\Psi}_{{X}}(\C{F}_{\ov{\eta}})).
\endCD
\end{equation}
We claim that all inner squares of \form{nearby2} are commutative. Namely, the middle inner square is commutative by functoriality, while the commutativity of the left and the right inner squares
follows by formulas $\ov{\Psi}_{\cdot}=i^*_s\circ i_{\ov{\eta}*}$ and definitions of the base change morphisms.
\end{proof}

Now we are ready to show \rp{const}.

\begin{Emp} \label{E:pfpconst}
{\bf Proof of \rp{const}.}
Without loss of generality we can assume that $s$ is a specialization of $t$ of codimension one.  Then there exists a spectrum of a discrete valuation ring $\C{D}$ and a morphism $f:\C{D}\to S$ whose image contains $s$ and $t$. Taking base change with respect to $f$ we can assume that $S=\C{D}$, $t=\ov{\eta}$ is the geometric generic point, while $s$ is the special point.

Then we have equalities
\[
\C{Tr}_{{c}_s}(u_s)=\C{Tr}_{c_s}(\Psi_{c}(u_{\eta}))=\Sp_{\Fix(c)}(\C{Tr}_{{c}_{\eta}}(u_{\eta}))= \ov{\Sp}_{\Fix(c)}(\pi_{\eta}^*(\C{Tr}_{{c}_{\eta}}(u_{\eta})))
=\ov{\Sp}_{\Fix(c)}(\C{Tr}_{{c}_{\ov{\eta}}}(u_{\ov{\eta}})),
\]
where

$\bullet$ the first equality follows from the fact that the isomorphism  $\C{F}_s\to\Psi_{X}(\C{F}_{\eta})$ from \re{ULA}(b) identifies $u_{s}$ with  $\Psi_{{c}}(u_{\eta})$ (by \rl{nearby});

$\bullet$ the second equality follows from the commutative diagram of \rp{nearby};

$\bullet$ the third equality follows from definition of $\Sp_X$ in \re{nearbyspec}(d);

$\bullet$ the last equality follows from the commutative diagram of \re{nearby}(a).

Now the assertion follows from \rl{toptriv}.
\end{Emp}

\end{document}